          \def\@setcopyright{}
          \def\serieslogo@{}
\definecolor{mg}{rgb}  {0.85, 0.,  0.85}
\newcommand\ie{{\em i.e.}~}
\def\N{\mathbb{N}}
\def\T{\mathbb{T}}
\def\Z{\mathbb{Z}}
\def\C{\mathbb{C}}
\def\U{\mathscr U}
\def\F{\mathscr F}
\def\A{\mathcal A}
\def\F{\mathcal F}
\def\R{\mathbb{R}}
\def\V{\mathcal V}
\def\({\left(}
\def\[{\left[}
\def\){\right)}
\def\]{\right]}
\def\l|{\left\lvert}
\def\r|{\right\rvert}
\DeclareMathOperator*{\slim}{s\hspace{0.1pt}-\hspace{0.1pt}lim}
\def\<{\langle}
\def\>{\rangle}
\def\e{\mathrm e}
\def\Ve{\hat V^\pm_\kappa}
\DeclareMathOperator{\rank}{Rank}
\DeclareMathOperator{\Ker}{Ker}
\newtheorem{Theorem}{Theorem}[section]
\newtheorem{Remark}[Theorem]{Remark}
\newtheorem{Lemma}[Theorem]{Lemma}
\newtheorem{Proposition}[Theorem]{Proposition}
\newtheorem{Definition}[Theorem]{Definition}
\crefname{Lemma}{Lemma}{Lemmas}
\newcommand{\bel}{\begin{equation} \label}
\newcommand{\ee}{\end{equation}}
\newcommand{\ba}{\begin{array}}
\newcommand{\ea}{\end{array}}
\begin{document}

\title[Eigenvalue Asymptotics near a flat band]{Eigenvalue Asymptotics near a flat band in presence of a slowly decaying potential}

\author[P.\ Miranda]{Pablo Miranda}\address{Departamento de Matem\'atica y Ciencia de la Computaci\'on, Universidad de Santiago de Chile, Las Sophoras 173, Santiago, Chile.}\email{pablo.miranda.r@usach.cl}
\author[D.\ Parra]{Daniel Parra}
\address{Departamento de Matemática y Estadística, Universidad de La Frontera, Avenida Francisco Salazar 01145, Temuco, Chile}
\email{daniel.parra@ufrontera.cl}

\begin{abstract}
We provide eigenvalue asymptotics for a Dirac--type operator on $\Z^n$, $n\geq 2$, perturbed by multiplication operators that decay as $|\mu|^{-\gamma}$ with $\gamma<n$. We show that the eigenvalues accumulate near the value of the flat band at a ``semiclassical'' rate with a constant that encodes the 
structure of the flat band. Similarly, we show that this behaviour can be obtained also for a Laplace operator on a periodic graph. 
\bigskip

\noindent{\bf Keywords:} Discrete Dirac operator, Eigenvalue Asymptotics, Flat Band.

\bigskip

\noindent{\bf Mathematics Subject Classification:} 35P20, 47A10, 81Q10, 47A55.
\end{abstract}

\maketitle \vspace{-1cm}


\section{Introduction}\label{sec:intro}

In this article, we consider an operator whose decomposition into a direct integral presents a flat band. We  are interested in the accumulation of eigenvalues near the value of the flat band when a perturbation is added. We start by briefly explaining the setting and then discuss our motivation for entertaining such an analysis.

Let us denote by $\mathcal{X}$ the standard graph structure in $\Z^n$ and consider the Dirac type operator on $\ell^2(\mathcal{X})$ defined by 

\begin{equation*}
H_0=\begin{pmatrix}
m & d^*\\
d & -m
\end{pmatrix},
\end{equation*}
where $d$ is the discrete version of the exterior derivative and $m$ a positive constant. We refer to \Cref{subsec:DDo} for the precise definition but one can readily notice that by construction $H_0$ satisfies the supersymmetry condition making it into an \emph{abstract Dirac operator} as in \cite{Th92}.  Moreover, from the analysis of its band functions, see \eqref{eq:bandas} below, we obtain that the spectrum of $H_0$ is 
\begin{equation}\label{spectrum}
\sigma(H_0)=\sigma_{ess}(H_0)=\sigma_{ac}(H_0) 
=[-\sqrt{m^2+4n},-m]\bigcup[m,\sqrt{m^2+4n}] \ .
\end{equation}  
An essential observation pertinent to this study is that if $n\geq2$, $-m$  is an embedded infinite dimensional eigenvalue of $H_0$. \emph{Throughout this article, we will assume that $m>0$ and $n\geq2$.}

Let us now consider a perturbation by a multiplication operator $V:\mathcal{X}\to \R$ decaying at infinity. 
Hence we define 
\begin{equation}\label{eq:fullH}
H:=H_0+ V\ .
\end{equation}
Since $V$ is a compact operator, $\sigma_{\rm ess}(H)=\sigma_{\rm ess}(H_0)$. Moreover, since  $m>0$, equality  \cref{spectrum} tell  us that  $(-m,m)$  is a gap in the essential spectrum of $H$. Then, for $\lambda\in(0,m)$  we consider the function 
\begin{equation*}
\mathcal{N}(\lambda)={\rm Rank} \mathds{1}_{(-m+\lambda,0)}(H)\ ,
\end{equation*}
with $\mathds{1}_{\Omega}$ being the characteristic function over the Borel set $\Omega$. Clearly, this function count the number of eigenvalues of $H$ (with multiplicity)  on the interval $(-m+\lambda,0)$. 

Our primary objective is to analyze the asymptotic behaviour of $\mathcal N$  as $\lambda\downarrow 0$ for a specific class of perturbations that decay slowly at infinity. Further details can be found in \Cref{Def:admissible}, while our main result is presented in \Cref{T1}.

One motivation for studying $\mathcal{N}$ stems from our previous work on the distribution of eigenvalues as presented in \cite{MPR23}. This article extends our prior research in three significant ways: it encompasses the general $n$-dimensional scenario, incorporates the potential for non-definite perturbations, and addresses potentials with slower rates of decay at infinity. Moreover, we employ a distinct method to derive the effective Hamiltonian, drawing inspiration from the analysis of eigenvalue distributions for magnetic Schrödinger operators, see \cite{Ra90,IwTa98,PuRo11}. This approach yields an effective Hamiltonian with a ``typical'' structure denoted as $PVP$, where $P$ is a projection. 

Another motivation arises from the recent surge in interest surrounding the study of flat bands in the discrete setting. Unlike the common assumption in the continuous case, periodic Schrödinger operators in periodic graphs often exhibit flat bands, as discussed in \cite{SY23}. While these configurations have long been studied by the physics community, see \cite{BL13,KFSH20} and references therein, recent attention from the spectral theory community has also emerged, see for instance \cite{KTW23,PS23x,Zw24,GZ23x}. Remarkably, we demonstrate a striking similarity between the results obtained for our Dirac operator and those for the Laplacian on a specific periodic graph showcasing such a flat band, see \Cref{theo:nonmaintheo}.

We finish this introduction by briefly describing the structure of the article. In \Cref{sec:model} we give the precise definition of $H_0$ and study its main spectral characteristics. In \Cref{sec:main} we introduce the class of admissible perturbations and state our main result, which we prove in \Cref{sec:proof}. Finally, in \Cref{sec:Laplace}, we show a similar result for the standard graph--Laplacian in a particular $\Z^2$--periodic graph.

\section{Spectral Theory for a Dirac operator on \texorpdfstring{$\Z^n$}{Zn} }\label{sec:model}

In this section we provide the definition of $H_0$ taking most notations from \cite{Pa17}, see also \cite{CT15}, recall its integral decomposition, and show the explicit expression of its resolvent as a fibered operator that will be central to our investigations. 

\subsection{Discrete Dirac operator}\label{subsec:DDo}

We denote by $\mathcal{X}=(\mathcal V,\mathcal A)$ the standard graph structure in $\Z^n$. That is, the set of vertices $\mathcal V$ consists of points  $\mu\in\Z^n$ and the set of oriented edges $\mathcal A$ is composed of pairs $(\mu,\nu)$ such that $\nu=\mu\pm \delta_j$, where $\{\delta_j\}_{j=1}^n$ denotes the canonical basis of $\Z^n$. An edge in $\mathcal{A}$ is written $\e=(\mu,\nu)$ and its transpose  $\overline{\e}:=(\nu,\mu)$. Let us consider the vector spaces of {$0-$cochains} $C^0(\mathcal{X})$ and {$1-$cochains} $C^1(\mathcal{X})$ given by
\begin{equation*}
C^0(\mathcal{X}):=\{f:\mathcal{V}\to\C\}\text{ ; }\qquad C^1(\mathcal{X}):=\{g: \mathcal{A}\to\C\mid g(\e)=-g(\overline{\e})\}.
\end{equation*}
The Hilbert spaces $\ell^2(\mathcal V)$ and $\ell^2(\mathcal A)$  are naturally defined by the inner products of cochains: 
$
\langle f_1,f_2 \rangle_0 =\sum_{\mu\in \mathcal V}f_1(\mu)\overline{f_2(\mu)}$ and $\langle g_1,g_2 \rangle_1=\frac12 \sum_{e\in \mathcal A}g_1(e)\overline{g_2(e)}$, respectively.

The {coboundary operator} $d:\ell^2(\mathcal V)\to \ell^2(\mathcal A)$ is defined by
\begin{equation}\label{1}
df(\e):=f(\nu)-f(\mu),\quad\text{ for }\e=(\mu,\nu)\in\mathcal A\ .
\end{equation}
This is the discrete version of the exterior derivative and its adjoint $d^*: \ell^2(\mathcal A)\to \ell^2(\mathcal V)$ is given at each edge by the finite sum 
\begin{equation}\label{2bis}
d^*g(\mu)=\sum_{j=1}^ng(\mu,\mu\pm\delta_j) ,\quad\text{ for } \mu\in\mathcal V.
\end{equation}
Let us define the Hilbert space $\ell^2(\mathcal{X})=\ell^2(\mathcal V)\oplus\ell^2(\mathcal A)$ and denote by $P_{\mathcal V}$ and $P_{\mathcal A}$ the corresponding projections. Further, we introduce the involution $\tau$ on $\ell^2(\mathcal{X})$ by
\begin{equation*}
\tau(f,g)=(f,-g)\ .
\end{equation*}
Then, for a strictly positive constant $m$ let us consider the free Dirac operator 
\begin{equation*}
H_0=d+d^*+m\tau=
\begin{pmatrix}
0 & d^*\\
d & 0
\end{pmatrix}+m\begin{pmatrix}
1 & 0\\
0 & -1
\end{pmatrix}=\begin{pmatrix}
m & d^*\\
d & -m
\end{pmatrix}
\end{equation*}
where we have slightly abused notation by considering $d$ and $d^*$ acting on $\ell^2(\mathcal{X})$. Note that $H_0$ is a Dirac-type operator in the sense that 
\begin{equation*}
H_0^2=\begin{pmatrix}
\Delta_0+m^2 & 0\\
0 & \Delta_1+m^2
\end{pmatrix}
\end{equation*}
where $\Delta_0$ is the  Laplacian on vertices and $\Delta_1$ is  the (1-down) Laplacian on edges.

\subsection{Integral decomposition}\label{subsec:Decom}

Let us denote by $\mathcal H:=L^2(\T^n,\C^{n+1})$. In this section we construct a unitary operator $\U:l^2(\mathcal{X})\to \mathcal H$. Consider the action of $\Z^n$ on $\mathcal X$ given for $\mu\in\Z^n$, $x\in \V$, and $\e=(x,y)\in A$ by   \begin{equation*}
\mu x=\mu+x \text{ and } \mu\e:=(\mu+x,\mu+y)\ .
\end{equation*} 
Then, a natural class of representatives of the orbits of such action is given by $\mathbf{0}\in \V$ together with the edges $\e_j=(\mathbf{0},\delta_j)$ and $\e_j^-=(\mathbf{0},-\delta_j)$. 

 Let us denote $\T^n=\R^n/[0,1]^n$ and set $C_c(\mathcal{X})$ to be the set of cochains with compact support, \ie, $f\in C_c(\mathcal{X})$ if and only if it vanishes except for a finite number of vertices and edges. We define $\U:C_c(\mathcal{X})\to  L^2(\T^n,\C^{n+1})$ by setting, for $f\in C_c(\mathcal{X})$ and $\xi\in\T^n$,

\begin{equation*}
(\U f)(\xi)=\(\sum_{\mu\in\Z^n}e^{-2\pi i\xi \cdot\mu}f(\mu),\sum_{\mu\in\Z^n}e^{-2\pi i\xi\cdot\mu}f(\mu\e_1),\ldots,\sum_{\mu\in\Z^n}e^{-2\pi i\xi\cdot\mu}f(\mu \e_n)\) .
\end{equation*}
 
Then $\U$ extends to a unitary operator, still denoted by $\U$, from $\ell^2(\mathcal{X})$ to $\mathcal H$. Further, we set $\mathcal H_{\mathcal V}:=\U P_{\mathcal V} (\ell^2(\mathcal X))\cong L^2(\T^n)$ and $\mathcal H_{\mathcal A}:=\U P_{\mathcal A} (\ell^2(\mathcal X))\cong L^2(\T^n,\C^{n})$. 

We draw the reader's attention to the fact that this definition of $\U$ correspond to the following choice of the Fourier transform in $\Z^n$:
 \begin{equation*}
\F:l^2(\Z^n)\to L^2(\T^n)\quad ; \quad (\F f)(\xi):=\sum_{\mu\in\Z^n}e^{-2\pi i \xi\cdot\mu}f(\mu)\ .
 \end{equation*} 
Finally, let us define  the  functions 
\begin{equation*}
a_j(\xi):=-1+e^{-2\pi i\xi_j}\ .
\end{equation*}

The following \nameCref{Pr_h_0} shows that through conjugation by $\U$, the operator $H_0$ becomes a multiplication operator, enabling the study of its spectral properties through the examination of characteristics of its band functions.

 \begin{Proposition}[{\cite[Prop. 3.5]{Pa17}}]\label{Pr_h_0}
 The operator $H_0$ satisfy that
\begin{equation*}
\U H_0\U^*=h_0
\end{equation*}
where $h_0$ denotes the multiplication operator by the real analytic function \begin{equation*}
h_0:\T^n\to M_{n+1\times n+1}(\C)
\end{equation*}
 on $L^2(\T^n,\C^{n+1})$ given by
\begin{equation}\label{h_0}
h_0(\xi)=\begin{pmatrix}
m &a_1(\xi)&\ldots &a_n(\xi)\\
\overline{a_1(\xi)}&-m &\ldots&0\\
\vdots&&\ddots&\vdots\\
\overline{a_n(\xi)}&0&\ldots&-m
\end{pmatrix}\ .
\end{equation}
\end{Proposition}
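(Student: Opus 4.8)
The plan is to verify directly that conjugation by the unitary $\U$ turns $H_0 = d + d^* + m\tau$ into multiplication by the matrix-valued function $h_0(\xi)$ displayed in \eqref{h_0}. Since $\U$ is already known to be unitary, and $\tau$ is diagonal in the vertex/edge decomposition, the only real content is computing $\U d \U^*$ and $\U d^* \U^*$ as operators between $\HH_{\mathcal V} \cong L^2(\T^n)$ and $\HH_{\mathcal A} \cong L^2(\T^n,\C^n)$, and checking they act as multiplication by the appropriate off-diagonal entries built from the functions $a_j(\xi) = -1 + e^{-2\pi i \xi_j}$.

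First I would fix $f \in C_c(\mathcal X)$ supported on vertices, i.e. $f \in C_c(\mathcal{V})$, and compute $\U(df)$. By definition of $d$ in \eqref{1}, for the representative edge $\e_j = (\mathbf 0,\delta_j)$ we have $(df)(\mu \e_j) = f(\mu + \delta_j) - f(\mu)$. Plugging into the $j$-th component of $\U$,
\begin{equation*}
(\U df)_j(\xi) = \sum_{\mu\in\Z^n} e^{-2\pi i \xi\cdot\mu}\bigl(f(\mu+\delta_j) - f(\mu)\bigr) = \bigl(e^{2\pi i \xi_j} - 1\bigr)(\F f)(\xi),
\end{equation*}
after the change of summation index $\mu \mapsto \mu - \delta_j$ in the first term. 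Now observe $e^{2\pi i \xi_j} - 1 = \overline{e^{-2\pi i\xi_j} - 1} = \overline{a_j(\xi)}$, so $\U d \U^*$ acts on $g \in L^2(\T^n)$ by sending it to the vector with $j$-th component $\overline{a_j(\xi)}\, g(\xi)$. This matches the first column (below the diagonal) of $h_0(\xi)$. Taking adjoints — or equivalently computing $\U d^* \U^*$ directly from \eqref{2bis}, where one must be a little careful about the sign convention $g(\overline\e) = -g(\e)$ so that $g(\mu,\mu-\delta_j) = -g((\mu-\delta_j)\e_j)$ — yields multiplication by the row vector $(a_1(\xi),\dots,a_n(\xi))$, matching the first row of $h_0(\xi)$ above the diagonal. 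Finally $\U(m\tau)\U^*$ is multiplication by $m$ on $\HH_{\mathcal V}$ and by $-m$ on $\HH_{\mathcal A}$, i.e. the diagonal $\diag(m,-m,\dots,-m)$. Assembling the three pieces according to the block decomposition $\mathcal H = \HH_{\mathcal V}\oplus\HH_{\mathcal A}$ gives exactly \eqref{h_0}.

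The main bookkeeping obstacle is getting the edge-orientation signs and the direction of the phase factor ($e^{2\pi i\xi_j}$ versus $e^{-2\pi i\xi_j}$, and conjugation) consistent with the chosen representatives $\e_j = (\mathbf 0,\delta_j)$ and with the definition of $\U$ and $\F$; a sign or conjugation slip would put $a_j$ where $\overline{a_j}$ belongs. Real analyticity of $h_0$ is immediate since each entry is a trigonometric polynomial. One should also note the implicit choice that the representative edges used in the direct-integral description are only the $\e_j$ (not the $\e_j^-$), which is legitimate because $\e_j^- = (-\delta_j)\e_j$ up to transpose, so the $\e_j^-$-data is determined by the $\e_j$-data via the $\Z^n$-action and the cochain antisymmetry; this is precisely what makes $\HH_{\mathcal A}\cong L^2(\T^n,\C^n)$ rather than $\C^{2n}$. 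Since this is quoted as \cite[Prop.~3.5]{Pa17}, it is enough to carry out this short computation to confirm the statement in the present conventions.
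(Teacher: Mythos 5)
Your verification is correct, and since the paper itself offers no proof (the statement is quoted from \cite[Prop.~3.5]{Pa17}), your direct Fourier computation is exactly the natural way to confirm it in the present conventions: the block $\U d\,\U^*$ is multiplication by the column $(\overline{a_1},\dots,\overline{a_n})^{T}$, the $m\tau$ part gives $\diag(m,-m,\dots,-m)$, and unitarity of $\U$ (which does hold on the edge component thanks to the factor $\tfrac12$ in $\langle\cdot,\cdot\rangle_1$ and Parseval) forces the remaining block to be the row $(a_1,\dots,a_n)$, so the assembled symbol is the Hermitian matrix \eqref{h_0}.

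One caveat on your parenthetical ``equivalently compute $\U d^{*}\U^{*}$ directly from \eqref{2bis}'': as printed, \eqref{2bis} is \emph{not} the adjoint of \eqref{1} with respect to the stated inner products. A short integration by parts (or your own two-term example) gives $d^{*}g(\mu)=-\sum_{j}g(\mu,\mu\pm\delta_j)$, i.e.\ the sum over edges \emph{into} $\mu$, so the literal formula \eqref{2bis} would produce the row $(-a_1,\dots,-a_n)$ and a non-Hermitian symbol. This is a sign slip in \eqref{2bis} (or a convention mismatch with \cite{Pa17}), not in your argument: since $d^{*}$ is by definition the Hilbert-space adjoint of the $d$ of \eqref{1}, your primary route via $(\U d\,\U^*)^*$ is the reliable one and yields \eqref{h_0} exactly as stated. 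It would be worth flagging the sign in \eqref{2bis} rather than asserting that the direct computation from it reproduces $(a_1,\dots,a_n)$.
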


\subsection{Spectrum and resolvent of \texorpdfstring{$H_0$}{H0}}\label{subsec:Spectral}

The  band functions of  $h_0$ have an explicit expression so we are able to compute $\sigma(H_0)=\bigcup_j \lambda_j(\T^n)$. Indeed, from \cref{h_0} one can see that for $\xi\in\T^n$ the characteristic polynomial associated to $h_0(\xi)$ is given by
\begin{equation}\label{eq:characteristic}
p_{z}(\xi)=(-1)^{n}(m+z)^{n-1}\Big(m^2-z^2+\sum_{j=1 }^n|a_i(\xi)|^2\Big)\ .
\end{equation}
For convenience we define $r:\T^n\to \R^+$ and $r_i:\T^n\to \R^+$ for $i\in \{1,\dots n\}$ by
\begin{equation}\label{varrho}
r(\xi):=\sum_{j=1 }^n|a_j(\xi)|^2 \text{ and } r_i(\xi)=r(\xi)-|a_i(\xi)|^2=\sum_{j\neq i }|a_j|^2 \ .
\end{equation}
Thus, there are  three band functions:
\begin{equation}\label{eq:bandas}
z_0(\xi)=-m\ , \quad z_\pm(\xi)=\pm\sqrt{m^2+r(\xi)}.
\end{equation}
From  the identities
\begin{equation}\label{|a_j|^2}
|a_j(\xi)|^2=2(1-\cos(2\pi\xi_j))=4\sin^2(\pi\xi_j)
\end{equation}
we easily see that the spectrum of $H_0$ satisfies \cref{spectrum}. Moreover, as is shown in  \Cref{fig:bandas}, we can observe that the threshold $-m$ correspond to both the maximum of $z_-$ and the constant value of the flat band. Note from \Cref{|a_j|^2,eq:bandas} that $z_-$ attains its maximum only at $\mathbf 0\in\T^n$ for every $n$ and hence \Cref{fig:bandas} is generic. 
\begin{figure}\centering
\includegraphics[width=0.4\textwidth]{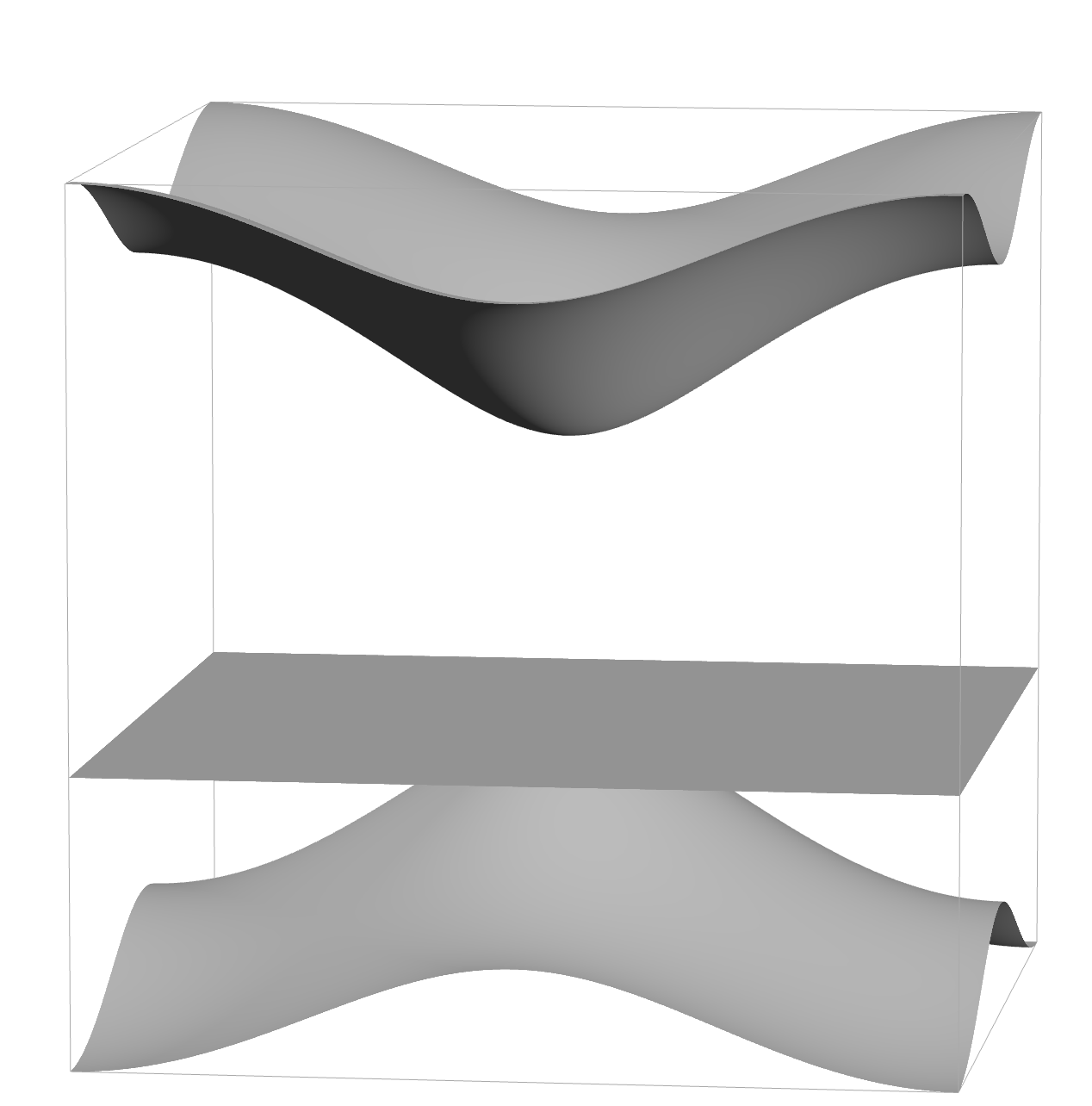}
\includegraphics[width=0.4\textwidth]{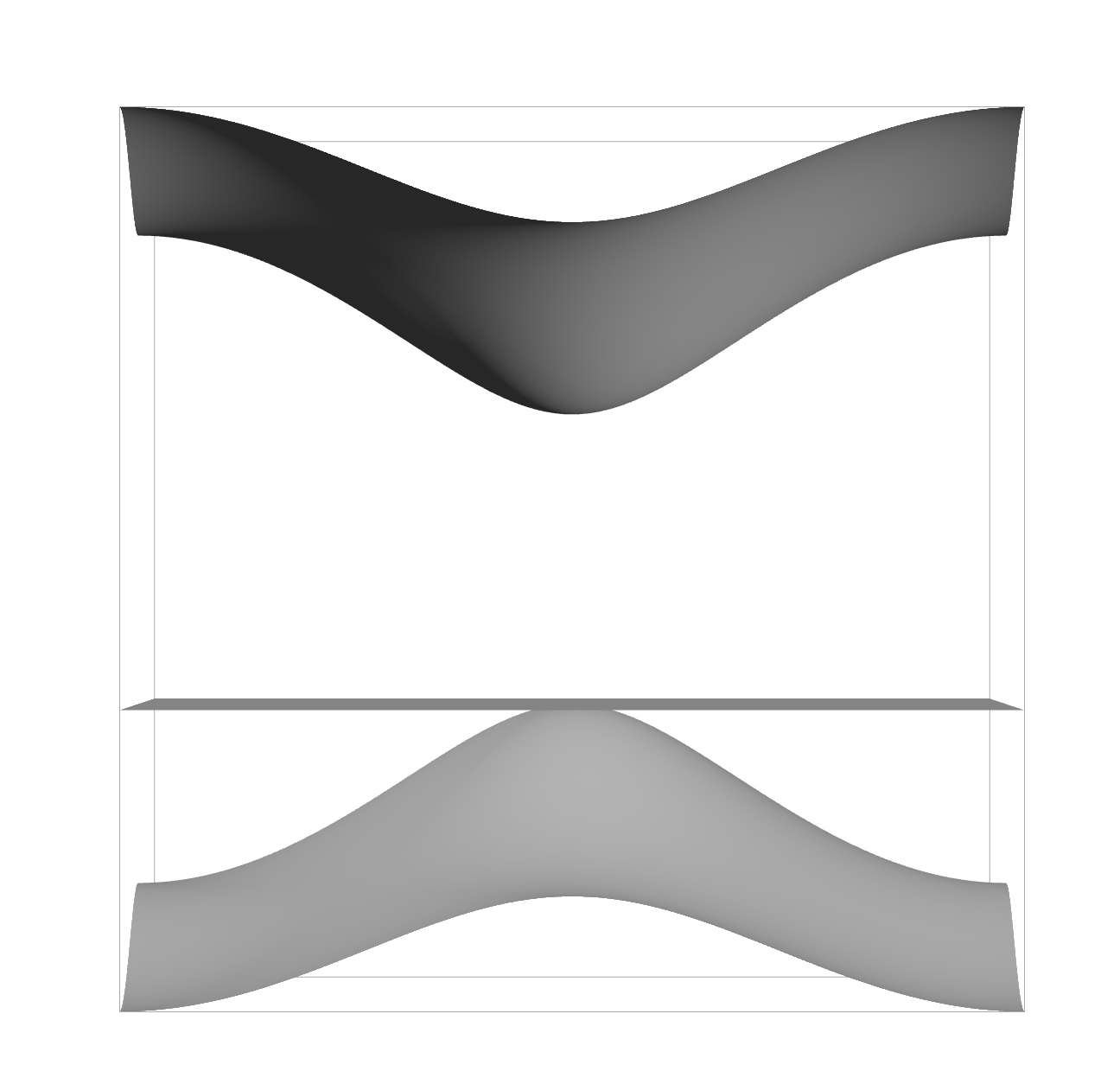}
\caption{Two views of the three band functions for $n=2$. The negative band and the flat band only touch at $(0,0)$. }\label{fig:bandas}
\end{figure}

From \Cref{h_0} and for $z \notin \sigma(H_0)$ on can check that
\begin{multline*}
(h_0-z)^{-1}=\frac{(-1)^n}{p_z}\times \\
\begin{pmatrix}
(z+m)^{n}&a_1(z+m)^{n-1} &\cdots & a_n(z+m)^{n-1}\\[.5em]
\overline{a_1}(z+m)^{n-1}&(z+m)^{n-2}(z^2-m^2-r_1)&\cdots&a_{n}\overline{a_{1}}(z+m)^{n-2}\\[.5em]
\vdots&\vdots&\ddots&\vdots\\
\overline{a_n}(z+m)^{n-1}&a_1\overline{a_n}(z+m)^{n-2}&\cdots&(z+m)^{n-2}(z^2-m^2-r_n)
\end{pmatrix}
\end{multline*}
from where one can obtain
\begin{equation}\label{resolvent}
(h_0-z)^{-1}\hspace{-2pt}=\frac{1}{(m+z)(m^2-z^2+r)}
 \begin{pmatrix}
(z+m)^{2}&a_1(z+m)&\cdots & a_n(z+m)\\[.5em]
\overline{a_1}(z+m)&z^2-m^2-r_1&\cdots&a_{n}\overline{a_{1}}\\[.5em]
\vdots&\vdots&\ddots&\vdots\\
\overline{a_n}(z+m)&a_1\overline{a_n}&\cdots&z^2-m^2-r_n
\end{pmatrix}\ .
\end{equation}

Notice that from the particular form of $h_0(\xi)+m$, that can be obtained directly from \Cref{h_0}, we can check that $\Ker (h_0+m)\leq \mathcal H_{\mathcal A}$. Indeed, one can prove directly that $\Ker (H_0+m)\leq \{\mathbf{0}\}\times \ell^2(A)$ by constructing for each $\mu\in \Z^n$ a closed path over which we define a cochain alternating the values $1$ and $-1$, see \cite[Sec. 2]{MPR23} for an explicit construction for the $\Z^2$ case. We stress that the flat bands of discrete periodic graphs are known to generate finitely supported eigenfunctions \cite{Ku91}.

\section{Perturbed operator and Main Result}\label{sec:main}


We turn now our attention to  the concrete class of perturbations $V$ that we will  treat in this article. A symmetric multiplication operator on $\ell^2(\mathcal{X})$ is defined by $V:\mathcal{X}\to \R$ such that $V(\e)=V(\overline{\e})$ for every $\e\in A$. Given such a $V$, our full hamiltonian is defined by \Cref{eq:fullH}. 

Further, we define the following real-valued functions on $\Z^n$
\begin{equation}\label{eq:littlev}
v_0(\mu):=V(\mu); \quad v_j(\mu):=V(\mu\e_j), \quad 1\leq j \leq n\ .
\end{equation}
This choice allows us to further specify the decay of $V$ at infinity, but other choices of representatives would give the same type of decay. 

Let us consider the class of Symbols  $S^\gamma(\Z^n)$ given by the functions $v:\Z^n\to \C$ that satisfies for any multi-index $\alpha=(\alpha_1,\cdots,\alpha_n)\in\N^n$
\bel{31may21} |{\rm D}^\alpha v(\mu)|\leq C_\alpha \langle \mu \rangle^{-\gamma-|\alpha|},\ee
where ${\rm D_{j}} v(\mu):=v(\mu+\delta_j)-v(\mu)$, $|\alpha|:=\sum_{j=1}^n\alpha_j$, and ${\rm D}^\alpha:={\rm D}_{1}^{\alpha_1}...{\rm D}_{n}^{\alpha_n} $.

\begin{Definition}\label{Def:admissible}
We call a perturbation $V$ \emph{admissible}  of order $\gamma$, with $n>\gamma>0$, if $\{v_j\}_{j=0}^n\in S^\gamma(\Z^n)$ and for $j=1,\dots,n$
\begin{equation}\label{eq:reduccion}
v_j(\mu)=\langle \mu \rangle ^{-\gamma} (\Gamma_j+o(1)) \text{ as }\mu\to \infty, 
\end{equation}
with  $\Gamma_j\neq0$ for at least one $j$.  
\end{Definition}

This condition may look to be restrictive, but simplifies the presentation of the results. Naturally,  alternative  classes of symbols and asymptotic behaviours at infinity of the $v_j$'s could be addressed using  akin methods to those employed in this article.

For an admissible  perturbation we define the diagonal $(n+1)\times(n+1)$ matrix $\Gamma$ by
\begin{equation}\label{eq:Gammadef}
\Gamma_{ll}=\begin{cases}
\Gamma_{l-1} &\text{ if }\Gamma_{l-1}>0\ ,\\
0&\text{ otherwise } .
\end{cases}
\end{equation}
We define as well the function  $M:\T^n\to M_{(n+1)\times(n+1)}(\mathbb C)$ by
\begin{equation}\label{eq:Mpositiva}
 M(\xi):= \frac1{r(\xi)} \begin{pmatrix}
 0&0&0&\cdots & 0\\[.5em]
  0&r_1(\xi)&-a_2(\xi)\overline{a_1(\xi)}&\cdots&-a_{n}(\xi)\overline{a_{1}(\xi)}\\[.5em]
  0&-a_1(\xi)\overline{a_2(\xi)}&r_2(\xi)&\cdots&-a_{n}(\xi)\overline{a_2(\xi)}\\
 \vdots&\vdots&\vdots&\ddots&\vdots\\
  0&-a_1(\xi)\overline{a_n(\xi)}&-a_2(\xi)\overline{a_n(\xi)}&\cdots&r_n(\xi)
\end{pmatrix}\ .
\end{equation}

\begin{Theorem}\label{T1}  Assume that $V$ is an admissible perturbation of order $\gamma$. Define the constant $\mathcal{C}$ by
\begin{equation}\label{eq:constante} 
\mathcal{C}:=\int_{\T^n}  {\rm Tr}\left( (\Gamma M(\xi) )^{\frac n\gamma} \right) d\xi\ .
\end{equation}
Let $\tau_n$ denotes the volume of the unitary sphere in $\R^n$. Then, the eigenvalue counting function satisfies
\begin{equation}\label{maintheo_1} 
\mathcal{N}(\lambda)=\lambda^{-\frac{n}\gamma}\, \,(\mathcal C\,\tau_n+o(1)),\quad \lambda\downarrow 0\ .
\end{equation}
\end{Theorem}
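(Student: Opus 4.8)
The plan is to reduce the counting of eigenvalues of $H$ near the flat band threshold $-m$ to a spectral counting problem for an explicit pseudodifferential-type operator on $\ell^2(\Z^n)$ (or several copies of it), and then to apply a semiclassical-type Weyl law for that operator. First I would use the Birman--Schwinger principle together with the resolvent formula \cref{resolvent}. Writing $z=-m+\lambda$ for small $\lambda>0$ and looking at where the flat band contributes, the resolvent $(h_0-z)^{-1}$ has a pole of order one coming from the factor $(m+z)^{-1}$; the associated residue is, up to conjugation by $\U$, the orthogonal projection $P$ onto $\Ker(H_0+m)$. Hence near the threshold $\mathds 1_{(-m+\lambda,0)}(H)$ is governed by the ``effective Hamiltonian'' $PVP$ acting on $\Ran P$, exactly the $PVP$-structure advertised in the introduction. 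The regular part of $(h_0-z)^{-1}$, the contribution of the bands $z_\pm$, stays bounded near $z=-m$ and produces only a lower-order (in fact $O(1)$ or $o(\lambda^{-n/\gamma})$) correction to $\mathcal N(\lambda)$; I would isolate it with the standard Birman--Schwinger decomposition $\mathcal N(\lambda) = n_+(1; |V|^{1/2}(H_0+m-\lambda)^{-1}|V|^{1/2}) + (\text{bounded})$ and a compactness/continuity argument in $\lambda$, plus a Weyl-type perturbation inequality $n_+(s_1+s_2; A+B)\le n_+(s_1;A)+n_+(s_2;B)$ to absorb the off-threshold part.

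Next I would identify $\Ran P$ concretely. As noted after \cref{resolvent}, $\Ker(H_0+m)\le\{\mathbf 0\}\times\ell^2(\mathcal A)$, and in the fibered picture the fiberwise kernel of $h_0(\xi)+m$ is the orthogonal complement, inside $\C^n$ (the edge components), of the vector $(\overline{a_1(\xi)},\dots,\overline{a_n(\xi)})$; the fiberwise orthogonal projection onto that kernel is precisely the matrix $M(\xi)$ of \cref{eq:Mpositiva}. So under $\U$, $P$ becomes multiplication by $M(\xi)$ on $\mathcal H_{\mathcal A}\cong L^2(\T^n,\C^n)$, and the effective operator $PVP$ is unitarily equivalent to the operator $M(\xi)\,\widehat V\,M(\xi)$ where $\widehat V$ is the (matrix-valued, since $V$ acts on the $n$ edge-representatives with symbols $v_j$) multiplication-by-$V$ operator conjugated into the fiber. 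Concretely the eigenvalue problem becomes: count eigenvalues $>\lambda$ of an $n\times n$ matrix pseudodifferential operator on $\ell^2(\Z^n)$ with principal symbol $M(\xi)^{1/2}\,\mathrm{diag}(v_1(\mu),\dots,v_n(\mu))\,M(\xi)^{1/2}$ (the $v_0$-component drops out because the first row/column of $M$ vanishes). Because of \cref{eq:reduccion} this symbol is asymptotically $\langle\mu\rangle^{-\gamma}$ times $M(\xi)^{1/2}\,\mathrm{diag}(\Gamma_1,\dots,\Gamma_n)\,M(\xi)^{1/2}$ — and only the positive $\Gamma_j$ contribute to the count of positive eigenvalues, which is exactly why $\Gamma$ in \cref{eq:Gammadef} is the positive-part diagonal matrix.

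Then I would apply the standard Weyl asymptotics for the number of eigenvalues above a small threshold of a self-adjoint operator whose Weyl symbol is $p(\mu,\xi)\sim \langle\mu\rangle^{-\gamma} q(\xi)$ with $q(\xi)=M(\xi)^{1/2}\Gamma M(\xi)^{1/2}\ge 0$ a bounded matrix-valued function on $\T^n$ lying in a suitable symbol class — this is the discrete analogue of the results of Birman--Solomyak and the magnetic-Schrödinger papers \cite{Ra90,IwTa98,PuRo11} cited in the introduction. The count is
\begin{equation*}
n_+\big(\lambda; PVP\big)=\big(1+o(1)\big)\,(2\pi)^{-n}\,\big|\{(\mu,\xi)\in\R^n\times\T^n:\ \text{some eigenvalue of }p(\mu,\xi)>\lambda\}\big|,
\end{equation*}
and evaluating the right-hand side: for fixed $\xi$ the eigenvalues of $p(\mu,\xi)$ are $\langle\mu\rangle^{-\gamma}$ times the eigenvalues $\varkappa_k(\xi)$ of $q(\xi)$, so the condition $\langle\mu\rangle^{-\gamma}\varkappa_k(\xi)>\lambda$ is $|\mu|<(\varkappa_k(\xi)/\lambda)^{1/\gamma}$ asymptotically, a Euclidean ball of volume $\tau_n\,(\varkappa_k(\xi)/\lambda)^{n/\gamma}/n$ — wait, more precisely of volume $\tau_n \int_0^{(\varkappa_k/\lambda)^{1/\gamma}} \rho^{n-1}d\rho$; summing over $k$ and integrating over $\xi$ gives $\lambda^{-n/\gamma}$ times $\tau_n$ times $\int_{\T^n}\sum_k \varkappa_k(\xi)^{n/\gamma}d\xi = \tau_n\int_{\T^n}\mathrm{Tr}\big((q(\xi))^{n/\gamma}\big)d\xi$. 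Since $\mathrm{Tr}\big((M^{1/2}\Gamma M^{1/2})^{n/\gamma}\big)=\mathrm{Tr}\big((\Gamma M)^{n/\gamma}\big)$ by cyclicity/similarity of $M^{1/2}\Gamma M^{1/2}$ and $\Gamma M$, this is exactly $\mathcal C\,\tau_n$ with $\mathcal C$ as in \cref{eq:constante}, and combined with the first paragraph we get \cref{maintheo_1}.

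\textbf{Main obstacle.} The delicate point is making the reduction to the effective Hamiltonian $PVP$ genuinely rigorous and error-controlled: one must show that replacing $(H_0+m-\lambda)^{-1}$ by $\lambda^{-1}P$ in the Birman--Schwinger operator $|V|^{1/2}(H_0+m-\lambda)^{-1}|V|^{1/2}$ changes the eigenvalue count only by $o(\lambda^{-n/\gamma})$. This requires quantitative control of the non-flat-band part of the resolvent uniformly near the threshold (it is bounded, but $V$ decays only slowly, $\gamma<n$, so $|V|^{1/2}$ is merely in a weak Schatten class and one cannot just say ``$V$ compact'' — one needs Schatten/weak-Schatten bounds or a careful $n_+$ perturbation argument), and it requires checking that $M(\xi)\widehat V M(\xi)$, once expanded, differs from its leading symbol $\langle\mu\rangle^{-\gamma}M^{1/2}\Gamma M^{1/2}$ by an operator whose $n_+(\lambda;\cdot)$ is $o(\lambda^{-n/\gamma})$ — this is where the symbol-class hypothesis \cref{31may21} and the $o(1)$ in \cref{eq:reduccion} are used, via a compactness argument in the spirit of the Weyl-law stability under relatively-form-compact perturbations of the symbol. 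Setting up the discrete pseudodifferential calculus on $\Z^n$ with matrix symbols cleanly enough to quote an off-the-shelf Weyl law, and handling the fact that $M(\xi)$ is singular at $\xi=\mathbf 0$ (where $r(\xi)=0$) so that the symbol is not smooth there, are the two technical hurdles I expect to spend the most effort on; the singularity at the origin should be harmless because it is a codimension-$n$ set and $M$ stays bounded, but this needs an honest argument.
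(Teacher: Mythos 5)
Your overall strategy coincides with the paper's: reduce to the effective Hamiltonian $PVP$ with fibre symbol $M(\xi)$ (your identification of $M(\xi)$ as the fiberwise projection onto $\Ker(h_0(\xi)+m)$ agrees with \Cref{Le4}), show the complementary part is negligible, prove Weyl-type asymptotics for an operator with symbol $\langle\mu\rangle^{-\gamma}M\Gamma M$, and finish by cyclicity of the trace. However, there is a genuine gap in your reduction step: you assert that the non-flat-band part of the resolvent ``stays bounded near $z=-m$'' and therefore contributes only $O(1)$, and in your ``main obstacle'' paragraph you again say it ``is bounded'', locating the difficulty solely in the slow decay of $V$. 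This is false: by \cref{eq:bandas} the dispersive band $z_-$ attains its maximum $-m$ at $\xi=\mathbf 0$, so the threshold is simultaneously the flat-band value and an edge of the absolutely continuous spectrum, and $(H_0+m-\lambda)^{-1}P^\perp$ has operator norm $\lambda^{-1}$ (the distance from $-m+\lambda$ to $z_-(\T^n)$ is exactly $\lambda$). Showing that this band-edge contribution is nevertheless $o(\lambda^{-n/\gamma})$ is a substantive part of the proof: in the paper this is \Cref{Le1}, which combines the Birman--Schwinger principle with the Cwikel-type estimate of \Cref{Cwikel-Bir-Sol} and a coarea-formula computation exploiting that $z_-$ reaches $-m$ only at the single nondegenerate point $\mathbf 0$, yielding a bound $O(\lambda^{-n/\gamma+n/4})$ --- of lower order, but far from bounded. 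Without an argument of this kind your reduction to $PVP$ does not go through as sketched.

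Two further points where your sketch would need repair. First, the displayed identity $\mathcal N(\lambda)=n_+\bigl(1;|V|^{1/2}(H_0+m-\lambda)^{-1}|V|^{1/2}\bigr)+O(1)$ is only correct for sign-definite $V$, while admitting non-definite perturbations is one of the stated aims; the paper sidesteps this by the Pushnitski--Rozenblum decoupling $H_\kappa^\pm=H_0+P(V\pm\kappa|V|)P+P^\perp(V\pm\kappa^{-1}|V|)P^\perp$ with $H_\kappa^-\leq H\leq H_\kappa^+$, applies Birman--Schwinger only to the $P^\perp$ block with the sign-definite majorant $W_\kappa$, and lets $\kappa\downarrow 0$ at the very end (\cref{21fev2024}). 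Second, there is no off-the-shelf Weyl law to quote for the discrete, matrix-valued operator $M\,\U(V\pm\kappa|V|)\U^*M$ with a merely asymptotically homogeneous potential; the paper proves the needed asymptotics (\Cref{T2}) by hand, via the Birman--Solomyak piecewise-constant approximation scheme (\Cref{2,3}), with \Cref{Cwikel-Bir-Sol} controlling the approximation error in the weak Schatten quasinorm. Your worry about the discontinuity of $M$ at $\xi=\mathbf 0$ is indeed harmless in that scheme, since $M$ is a bounded (projection-valued) function and only $L^p$-approximation by step functions is required.
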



\begin{Remark}\label{Re:Landau}
The best--known case of degenerate eigenvalues in the continuous setting is the Landau Hamiltonian on $\R^{2}$. Although they are not usually thought of as flat bands, the direct integral decomposition obtained from the Landau gauge gives us that each Landau level is the image of a constant band function in $\R$. In this sense, it is somewhat natural that the asymptotic order  obtained in \Cref{maintheo_1} coincides with the result in \cite[Theo. 2.6]{Ra90}, see also \cref{21fev2024}. However, the constants differ in both cases. For the Landau Hamiltonian, the constant depends only on the multiplicity of the corresponding Landau level and the intensity of the magnetic field, whereas for the discrete Dirac operator, the perturbation interacts with the associated eigenspace non--trivially as encoded by $\mathcal{C}$. 
\end{Remark}

\section{Proof }\label{sec:proof}

In this section we will prove our main result \Cref{T1}. Before that, we start by recalling some known results on compact operators in order to settle notation and then reduce the study of $\mathcal N $ to the study of the eigenvalue counting function of an effective Hamiltonian.

\subsection{Some notation and results on compact operators}

Given the Hilbert spaces $\mathcal{H}_1$ and $\mathcal{H}_2$, we denote by $\mathfrak{S}_\infty(\mathcal{H}_1,\mathcal{H}_2)$ the class of compact operators from $\mathcal{H}_1$ to $\mathcal{H}_2$. When $\mathcal{H}_1=\mathcal{H}_2=\mathcal{H}$ we will just write $\mathfrak{S}_\infty(\mathcal{H})$.  For $K=K^*\in\mathfrak{S}_\infty(\mathcal{H})$ and $s>0$ we set
$$
n_{\pm}(s; K) : = \rank\mathds{1}_{(s,\infty)}(\pm K)\ .
$$
Thus, the functions $n_{\pm}(\cdot; K)$ are respectively the counting functions of the positive and negative eigenvalues of the operator $K$. For $K\in\mathfrak{S}_\infty(\mathcal{H}_1,\mathcal{H}_2)$ we define
\begin{equation*}
n_*(s; K) : = n_+(s^2; K^* K), \quad s> 0;
\end{equation*}
thus  $n_{*}(\cdot; K)$ is the counting function of the singular values of $K$ which, when ordered non--increasingly, we denote by $\{s_j(K)\}$. 
Let $K_j$, $j=1,2$ be self-adjoint compact operators. For $s_1, s_2>0$, we have  the  Weyl inequalities (see e.g. \cite[Theorem 9.2.9]{BS87})
\begin{equation}\label{weyl1}
n_\pm(s_1 + s_2; K_1 + K_2) \leq n_\pm(s_1; K_1) + n_\pm(s_2; K_2)\ .
\end{equation}
 If instead we only have $\{K_1,K_2\}\subset \mathfrak{S}_\infty(\mathcal{H}_1,\mathcal{H}_2)$ the Ky Fan inequality (see e.g. \cite[Subsection 11.1.3]{BS87}) gives
\begin{equation}\label{kyfan1}
n_*(s_1 + s_2; K_1 + K_2) \leq n_*(s_1; K_1) + n_*(s_2;K_2)\ .
\end{equation}
Further, for $0<p<\infty$  we define the class of compact operators $\mathfrak{S}_{p,w}$ by 
\begin{equation*}
\mathfrak{S}_{p,w}:=\{K\in \mathfrak{S}_\infty: s_j(K)=O(j^{-1/p})\}\ ,
\end{equation*}
together with the quasi-norm 
\begin{equation*}
||K||_{p,w}:=\sup_j\{j^{1/p}s_j(K)\}=\left(\sup_{s>0} \{s^pn_*(s;K)\right)^{1/p}
\end{equation*}
that satisfies the "weakened triangle inequality" 
\begin{equation*}
||K_1+K_2||_{p,w}\leq2^{1/p}(||K||_{p,w}+||K||_{p,w})
\end{equation*}
and the "weakened H\"older inequality" 
\begin{equation}\label{weak_holder}
||K_1K_2||_{r,w}\leq c(p,q) ||K_1||_{p,w}||K_2||_{q,w}\ ,
\end{equation}
for $r^{-1}=p^{-1}+q^{-1}$ and $c(p,q)=(p/r)^{1/p}(q/r)^{1/q}$ (see \cite[Chapter 11]{BS87}).  

Finally, consider the set $l_{p,w}$ of functions $v:\Z^n\to\C$ such that 
\begin{equation*}
\#\{\mu\in\Z^n:|v(\mu)|>\lambda\}=O(\lambda^{-p})\ .
\end{equation*}

Let us finish this section  by considering the following result, which is a particular case of  \cite[Theorem 4.8(ii)]{BKS91} 
\begin{Proposition}[Cwikel-Birman-Solomyak]\label{Cwikel-Bir-Sol} Let $p>2$ and assume  $v\in l_{p,w}$ and $f\in L^{p}(\T^n)$. Then $f\F v\in \mathfrak{S}_{p,w}( \ell^2(\Z^n),L^2(\T^n))$, and there exists a positive constant $C(n)$ such that 
$$  ||f\F v||_{p,w} \leq C(n) \| v\|_{l_{p,w}}\|f\|_{L^{p}(\T^n)}.$$
\end{Proposition}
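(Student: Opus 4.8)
The plan is to deduce the stated bound from the general weak-Schatten result of Birman–Karadzhov–Solomyak \cite[Theorem 4.8(ii)]{BKS91} by identifying the present situation as a special case of the abstract setting there. The operator under consideration, $f\F v$, is the composition of multiplication by $v$ on $\ell^2(\Z^n)$, the Fourier transform $\F:\ell^2(\Z^n)\to L^2(\T^n)$, and multiplication by $f$ on $L^2(\T^n)$. Since $\F$ is unitary, it suffices to control $f\F v$ as a product of two ``multiplier'' pieces glued along the Fourier transform: one measures the decay of $v$ in the ``position'' variable $\mu\in\Z^n$, the other the integrability of $f$ in the ``frequency'' variable $\xi\in\T^n$. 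This is precisely the Cwikel-type framework in which the relevant quantity is governed by the product of the weak-$l_p$ quasinorm of $v$ and the $L^p$ norm of $f$; I would invoke \cite[Theorem 4.8(ii)]{BKS91} with the pair of measure spaces $(\Z^n,\text{counting})$ and $(\T^n,\text{Lebesgue})$ and the ``kernel'' $e^{-2\pi i\xi\cdot\mu}$, whose modulus is identically $1$, so that the boundedness of the associated integral operator from $l_{p,w}$ functions to $L^p$ functions holds with a constant depending only on $n$.

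The steps, in order, are: (i) recall from the definitions in the excerpt that $v\in l_{p,w}$ means $\#\{\mu:|v(\mu)|>\lambda\}=O(\lambda^{-p})$, i.e.\ the non-increasing rearrangement of $\{|v(\mu)|\}$ decays like $j^{-1/p}$, and that membership in $\mathfrak S_{p,w}$ with quasinorm $\|\cdot\|_{p,w}$ is the operator-theoretic analogue via singular values; (ii) rewrite $f\F v$ so that the hypothesis $p>2$ makes the relevant exponents admissible for the Cwikel bound (the threshold $p>2$ is exactly what is needed for the weak-type endpoint in \cite{BKS91}, reflecting the failure at $p=2$); (iii) quote \cite[Theorem 4.8(ii)]{BKS91} verbatim for the model operator with bounded kernel on $\Z^n\times\T^n$, obtaining $f\F v\in\mathfrak S_{p,w}(\ell^2(\Z^n),L^2(\T^n))$ together with the quasinorm bound $\|f\F v\|_{p,w}\leq C(n)\|v\|_{l_{p,w}}\|f\|_{L^p(\T^n)}$; (iv) note that the constant there depends only on the dimension $n$ (through the structure of $\T^n$ and the measure), not on $v$ or $f$, which is all that is asserted.

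The only genuine point requiring care — and the step I expect to be the main obstacle — is the verification that the hypotheses of \cite[Theorem 4.8(ii)]{BKS91} are met in this discrete–continuous mixed setting, in particular that the counting measure on $\Z^n$ and Lebesgue measure on $\T^n$ fit the regularity assumptions on the underlying measure spaces in that theorem, and that the phase kernel $e^{-2\pi i\xi\cdot\mu}$ (as opposed to a convolution-type kernel with decay) is covered. Since $|e^{-2\pi i\xi\cdot\mu}|\equiv1$ and $\T^n$ has finite Lebesgue measure while $\Z^n$ is countable, these conditions are satisfied, and the bounded-kernel case of the Cwikel estimate applies directly; one then simply transcribes the conclusion. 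No further computation is needed — this \namecref{Cwikel-Bir-Sol} is stated precisely so that it can be cited in the proof of \Cref{T1}, and its own proof is nothing more than a careful specialization of the cited general theorem.
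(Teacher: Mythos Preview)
Your approach is exactly the paper's: the proposition is introduced there with the phrase ``which is a particular case of \cite[Theorem 4.8(ii)]{BKS91}'' and no further proof is given. Your write-up simply makes explicit the specialization (counting measure on $\Z^n$, Lebesgue on $\T^n$, unimodular Fourier kernel, $p>2$) that the paper leaves to the reader.
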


\subsection{Effective Hamiltomian} 
In this section we will use the notation 
$$
\mathcal N((a,b);T):=\rank\mathds{1}_{(a,b)}(T),
$$
where $a<b$ and  $T$ is a self-adjoint operator without essential spectrum in $(a,b)$. Following the approach coming from the study of magnetic Schrödinger operators, our aim is to study $PVP$ where $P$ stands for the projection on the flat band, i.e.,
\bel{def_P}
P:=\mathds{1}_{\{-m\}}(H_0)\ .
\ee

\begin{Lemma}\label{Le4} Recall that $M:\T^n\to M_{(n+1)\times(n+1)}$ was defined in \Cref{eq:Mpositiva}.
Then
$$
P=\U^* M \U\ .
$$
\end{Lemma}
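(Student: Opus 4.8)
The plan is to identify the orthogonal projection $P$ onto the flat band $\ker(H_0+m)$ by working fiberwise after conjugating by $\U$. Since $\U H_0 \U^* = h_0$ acts as multiplication by the matrix-valued function $h_0(\xi)$, the projection $P = \mathds{1}_{\{-m\}}(H_0)$ conjugates to multiplication by $\mathds{1}_{\{-m\}}(h_0(\xi))$, the orthogonal projection in $\C^{n+1}$ onto $\ker(h_0(\xi)+m)$. So it suffices to show that, for each $\xi \in \T^n$ with $r(\xi)\neq 0$ (which is all but a measure-zero set — indeed all but $\xi = \mathbf{0}$), the matrix $M(\xi)$ defined in \eqref{eq:Mpositiva} is exactly this orthogonal projection.

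First I would compute $\ker(h_0(\xi)+m)$ explicitly. From \eqref{h_0},
\[
h_0(\xi)+m = \begin{pmatrix} 2m & a_1 & \cdots & a_n \\ \overline{a_1} & 0 & \cdots & 0 \\ \vdots & & \ddots & \vdots \\ \overline{a_n} & 0 & \cdots & 0 \end{pmatrix},
\]
so a vector $(x_0, x_1, \dots, x_n)^t$ lies in the kernel iff $2m x_0 + \sum_j a_j x_j = 0$ and $\overline{a_j} x_0 = 0$ for all $j$. When $r(\xi)\neq 0$ at least one $a_j \neq 0$, forcing $x_0 = 0$, and then the single remaining condition is $\sum_{j=1}^n a_j x_j = 0$. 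Hence $\ker(h_0(\xi)+m) = \{0\}\oplus \{x \in \C^n : a(\xi)\cdot x = 0\}$, where $a(\xi) = (a_1(\xi),\dots,a_n(\xi))$; this is the orthogonal complement in $\C^n$ of the line spanned by $\overline{a(\xi)}$ (with the convention matching the inner product used, so that the relevant vector is $(\overline{a_1},\dots,\overline{a_n})$ up to conjugation — I would pin down the convention carefully here). The orthogonal projection onto that hyperplane is $I_n - \frac{1}{r(\xi)} \overline{a(\xi)}\, a(\xi)^*$ in the appropriate ordering, and writing this out in coordinates gives precisely the lower-right $n\times n$ block of $M(\xi)$: the diagonal entries $\frac{1}{r}(r - |a_i|^2) = \frac{r_i}{r}$ and the off-diagonal entries $-\frac{1}{r} a_i \overline{a_j}$, with the first row and column zero. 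That is exactly \eqref{eq:Mpositiva}.

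Then I would verify directly that $M(\xi)$ is indeed a projection and that its range is the kernel, i.e. $M(\xi)^2 = M(\xi) = M(\xi)^*$ and $(h_0(\xi)+m)M(\xi) = 0$, $\operatorname{rank} M(\xi) = n-1$ generically — this can be done by a short matrix computation using $\sum_j |a_j|^2 = r$ and $r_i = r - |a_i|^2$. Self-adjointness and idempotency confirm it is an \emph{orthogonal} projection, and the rank/kernel check confirms it is the right one. Finally, since $M(\xi)$ agrees with $\mathds{1}_{\{-m\}}(h_0(\xi))$ for a.e. $\xi$, the multiplication operators coincide, so $\U P \U^* = M$ as multiplication operators on $\mathcal H$, i.e. $P = \U^* M \U$.

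The main obstacle is bookkeeping rather than conceptual: getting the conjugation/inner-product conventions exactly right so that the hyperplane $\{a(\xi)\cdot x = 0\}$ has its \emph{orthogonal} projection equal to $M(\xi)$ with the stated entries (the placement of complex conjugates in $M(\xi)$ versus in the defining equation of the kernel must be consistent with the Hermitian inner product on $\C^{n+1}$). I would also note for completeness that the exceptional point $\xi = \mathbf 0$, where $r(\xi) = 0$ and the fiber kernel jumps dimension, is a single point and hence irrelevant for the identification of $P$ as a multiplication operator on $L^2(\T^n,\C^{n+1})$; alternatively one can observe $M$ extends measurably and the identity $P = \U^* M \U$ is unaffected by values on a null set.
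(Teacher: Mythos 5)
Your proof is correct, but it takes a genuinely different route from the paper. The paper's proof is resolvent-based: it writes $P$ via Stone's formula as a strong limit of integrals of boundary values of $(H_0-z)^{-1}$, and then reads $M(\xi)$ off the explicit fibered resolvent \cref{resolvent}, the point being that only the scalar factor $\frac{1}{m+z}$ contributes a delta at $-m$, so $M$ appears as the residue of $(h_0-z)^{-1}$ at the flat-band pole. You instead identify $P$ fiberwise through the functional calculus: $\U P\U^*$ is multiplication by $\mathds{1}_{\{-m\}}(h_0(\xi))$, you compute $\Ker(h_0(\xi)+m)=\{0\}\oplus\{x\in\C^n:\sum_j a_j(\xi)x_j=0\}$ for $r(\xi)\neq 0$, and you check that $M(\xi)$ is the orthogonal projection onto this hyperplane, disposing of the single point $\{r=0\}=\{\mathbf 0\}$ as a null set. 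What the paper's route buys is economy: it recycles the already displayed resolvent and needs no separate verification that $M(\xi)$ is a projection. What your route buys is elementarity and transparency: no boundary-value limit to justify, and the structure of $M$ (vanishing first row and column, rank $n-1$ matching the multiplicity of the factor $(m+z)^{n-1}$ in \cref{eq:characteristic}) becomes self-evident; the price is invoking the standard fact that Borel functional calculus of a decomposable multiplication operator acts fiberwise. One bookkeeping point you flagged yourself and should indeed fix in a write-up: since the kernel is $\{x:\sum_j a_jx_j=0\}$, the normal vector is $\overline{a(\xi)}$, so the projection is $I_n-\frac1{r}\,\overline{a}\,(\overline{a})^{*}$ with $(i,j)$ off-diagonal entry $-\overline{a_i}a_j/r$, which is exactly what \cref{eq:Mpositiva} displays; your final coordinate expression $-a_i\overline{a_j}/r$ has the conjugates transposed, a harmless slip once the convention is pinned down as you indicate.
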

\begin{proof}
By Stone formula one can check that 
\begin{equation*}
P=\slim_{\kappa\downarrow 0}\frac{1}{i\pi}\int_{-m}^0\bigl((H_0-s-i\kappa)^{-1}-(H_0-s+i\kappa)^{-1}\bigr)ds\ .
\end{equation*}
Then, from \Cref{resolvent} we need only to check that
\begin{equation*}
\lim_{\kappa\downarrow 0}\int_{-m}^0\biggl(\frac{1}{m+\lambda+i\kappa}-\frac{1}{m+\lambda-i\kappa}\biggr)d\lambda=-i\pi  \delta_{-m},
\end{equation*}
where $\delta_{-m}$ is the Dirac delta function on $-m$.
\end{proof}
Now, set $P^\perp:=I-P$ and for $\kappa>0$ define 
$$
H_\kappa^\pm:=H_0+P(V\pm\kappa|V|)P+P^\perp(V\pm\kappa^{-1}|V|)P^\perp
$$
Then, by \cite[Lemma 4.2]{PuRo11}
\bel{22fev2024}
H_\kappa^-\leq H \leq H_\kappa^+.
\ee

Then, arguing as in the proof of \cite[Theorem 4.1(ii)]{PuRo11} we obtain that:



\begin{align}
\pm\mathcal N(\lambda)\leq&\pm\mathcal N((-m+\lambda,0); -mP+P(V\pm\kappa|V|)P)\nonumber \\
\label{20fev2024b}&\pm \mathcal N((-m+\lambda,0); P^\perp(H_0+(V\pm\kappa^{-1}|V|))P^\perp)+O(1).
\end{align}

In the next \nameCref{Le1} we treat the second term on the right of \Cref{20fev2024b}, showing that the perturbation $V$ interacts with the complement of the degenerated eigenspace only at a lesser order. 

\begin{Lemma}\label{Le1} 
$$\mathcal N((-m+\lambda,0); P^\perp(H_0+V\pm\kappa^{-1}|V|)P^\perp)=o(\lambda^{-n/\gamma}),\quad \lambda\downarrow 0.$$
\end{Lemma}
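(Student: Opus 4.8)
The plan is to show that the operator $P^\perp(H_0+V\pm\kappa^{-1}|V|)P^\perp$, restricted to the range of $P^\perp$, has essential spectrum bounded away from $-m$ and that the eigenvalues entering the window $(-m+\lambda,0)$ contribute only a lower-order amount compared to $\lambda^{-n/\gamma}$. First I would record that $H_0\!\restriction_{\Ran P^\perp}$ has spectrum contained in $[-\sqrt{m^2+4n},-m]\cup[m,\sqrt{m^2+4n}]$ \emph{with} the flat band removed; in particular, since $z_-(\xi)=-m$ only at $\xi=\mathbf{0}$ (as noted after \Cref{eq:bandas}), the part of the spectrum of $P^\perp H_0 P^\perp$ near $-m$ comes entirely from the negative band $z_-$ approaching its maximum, and the density of states of $z_-$ near its nondegenerate maximum is what governs the count. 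So the strategy is a standard Weyl-inequality sandwich: split $P^\perp(H_0+V\pm\kappa^{-1}|V|)P^\perp$ as the sum of $P^\perp H_0 P^\perp - mP$ (whose eigenvalue count in $(-m+\lambda,0)$ is controlled by the band function $z_-$) and a compact perturbation built from $V$, and use \cref{weyl1} to separate their contributions.

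The key steps, in order: (1) Using \Cref{weyl1}, for any $\theta\in(0,1)$ bound $\mathcal N((-m+\lambda,0);P^\perp(H_0+V\pm\kappa^{-1}|V|)P^\perp)$ by $\mathcal N((-m+(1-\theta)\lambda,0);P^\perp H_0 P^\perp) + n_+(\theta\lambda;P^\perp(\kappa^{-1}|V|\pm V)P^\perp)+O(1)$, discarding the $+m$ part of the spectrum which contributes $O(1)$. (2) For the first term, pass through $\U$: $P^\perp H_0 P^\perp$ is fibered, and on the orthogonal complement of the flat-band eigenspace its spectrum near $-m$ is the range of $z_-(\xi)=-\sqrt{m^2+r(\xi)}$. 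Hence $\mathcal N((-m+\varepsilon,0);P^\perp H_0 P^\perp)=\big|\{\xi\in\T^n: -m+\varepsilon<z_-(\xi)<0\}\big|$, which equals $|\{\xi: r(\xi)<2m\varepsilon-\varepsilon^2\}|$. Since $r(\xi)=\sum_j 4\sin^2(\pi\xi_j)$ vanishes nondegenerately (quadratically) at $\xi=\mathbf 0$, we get $r(\xi)\asymp 4\pi^2|\xi|^2$ near the origin, so this volume is $O(\varepsilon^{n/2})$. As $n/2<n/\gamma$ precisely when $\gamma<2$, and here $\gamma<n$... — so this needs care: when $\gamma\ge 2$ the bound $O(\lambda^{n/2})$ is \emph{not} $o(\lambda^{-n/\gamma})$ automatically unless $n/2 \ge -n/\gamma$, which is always true; in fact $O(\lambda^{n/2})$ is $o(1)=o(\lambda^{-n/\gamma})$ since the exponent is positive. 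So this term is even $O(1)$-like (it tends to a constant or to $0$... actually it tends to $0$), hence certainly $o(\lambda^{-n/\gamma})$. (3) For the second term, I would apply \Cref{Cwikel-Bir-Sol}: writing the multiplication operator $|V|$ (and $V$) in the momentum representation, $\U(\kappa^{-1}|V|\pm V)\U^*$ is built out of operators of the form $f\mathcal F v$ with $v\in S^\gamma(\Z^n)\subset l_{n/\gamma,w}$ (the symbol decay $\langle\mu\rangle^{-\gamma}$ gives $\#\{|v(\mu)|>t\}=O(t^{-n/\gamma})$) and $f\in L^{n/\gamma}$, hence these lie in $\mathfrak S_{n/\gamma,w}$. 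But wait — $\mathfrak S_{n/\gamma,w}$ gives $n_+(s;\cdot)=O(s^{-n/\gamma})$, which is exactly of the \emph{critical} order $\lambda^{-n/\gamma}$, not lower order. The resolution, exactly as in Birman–Schwinger arguments for slowly decaying potentials, is that the \emph{leading} $\lambda^{-n/\gamma}$ contribution of $PVP$ comes entirely from the flat-band projection $P$; the operator squeezed with $P^\perp$ loses the resonance and one must extract the gain. I would argue that $P^\perp \mathcal F v P^\perp$, or more precisely the relevant operator, actually belongs to a better class because the symbol of $P^\perp$ vanishes (with $r(\xi)$) at $\xi=\mathbf 0$, which is where $M(\xi)$ concentrates the singular behaviour; combined with an $\varepsilon$ of room in the Hölder exponents via \cref{weak_holder}, one upgrades $O(s^{-n/\gamma})$ to $o(s^{-n/\gamma})$.

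\textbf{Main obstacle.} The genuine difficulty is step (3): showing that the $V$-dependent term is $o(\lambda^{-n/\gamma})$ rather than merely $O(\lambda^{-n/\gamma})$. The Cwikel–Birman–Solomyak bound is sharp at the critical exponent, so a direct application only yields the $O$ bound, which would not suffice — it would drown out the main term in \Cref{T1}. One needs a compactness/approximation argument: approximate $v_j$ by compactly supported (hence trace-class, or at least in $\mathfrak S_{p,w}$ for all $p$) symbols, control the tail using that the tail has small $l_{n/\gamma,w}$ quasi-norm, and use the super-additivity of $s^{n/\gamma}n_+(s;\cdot)$ under the weak-$\mathfrak S_p$ triangle inequality to conclude that the limsup of $\lambda^{n/\gamma}\mathcal N$ is $0$. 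Threading this through the non-commuting projections $P^\perp$ and the matrix structure of $M(\xi)$, while keeping track that the $\kappa^{-1}$ blow-up does not interact badly with the $\lambda\downarrow 0$ limit (it doesn't, because $\kappa$ is fixed when we take $\lambda\downarrow0$, and only sent to $0$ afterwards), is where the real work lies; the rest is bookkeeping with Weyl's inequalities.
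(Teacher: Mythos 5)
Your step (3) is not just the ``main obstacle'' --- as proposed it cannot be repaired, because the statement it needs is false. After the Weyl splitting, you must show $n_+(\theta\lambda;P^\perp(\kappa^{-1}|V|\pm V)P^\perp)=o(\lambda^{-n/\gamma})$, but for an admissible $V$ (decay exactly $\langle\mu\rangle^{-\gamma}$) this operator generically has counting function of \emph{exactly} the critical order $\lambda^{-n/\gamma}$ with a nonzero coefficient: its fibre symbol involves $({\rm Id}-M(\xi))\,\Gamma\,({\rm Id}-M(\xi))$, and ${\rm Id}-M(\xi)$ is a rank-two projection for a.e.\ $\xi$ --- your claim that ``the symbol of $P^\perp$ vanishes with $r(\xi)$ at $\xi=\mathbf 0$'' is incorrect, so squeezing by $P^\perp$ alone produces no gain (compare \Cref{T2}, which would apply verbatim with $M$ replaced by ${\rm Id}-M$ and give a positive constant). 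Your fallback in the last paragraph also fails: finitely supported symbols are \emph{not} dense in $l_{n/\gamma,w}$, and the tail of $\langle\mu\rangle^{-\gamma}$ has weak quasi-norm bounded below uniformly in the cut-off, precisely because the decay sits at the critical rate; so no compact-approximation argument can turn the Cwikel--Birman--Solomyak $O$-bound into an $o$-bound for $P^\perp VP^\perp$ by itself. A further (lesser) issue: for the fibered operator $P^\perp H_0P^\perp$ the rank of a spectral projection is $0$ or $\infty$, not a phase-space volume, so your ``density of states'' computation in step (2) is not meaningful --- in your decomposition that term is simply $0$ because $(-m+\lambda,0)$ lies in its spectral gap --- and the Weyl-type splitting for eigenvalues in a gap itself needs justification (the paper uses monotonicity and \cite[Thm.~4.1(ii)]{PuRo11} rather than \cref{weyl1} applied to the non-compact $H_0$).

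The mechanism that actually produces the $o(\lambda^{-n/\gamma})$ is the coupling of $V$ with the resolvent of $H_0$ on $\Ran P^\perp$, not the smallness of $P^\perp VP^\perp$. The paper dominates $|V|$ by $W_\kappa=C(1+\kappa^{-1})\langle\mu\rangle^{-\gamma}$, applies the Birman--Schwinger principle to reduce to $n_+\bigl(1;\,W_\kappa^{1/2}P^\perp(H_0+m-\lambda)^{-1}P^\perp W_\kappa^{1/2}\bigr)+O(1)$ --- counting at the fixed level $s=1$, not at $s\sim\lambda$ --- and then estimates the $\mathfrak S_{n/\gamma,w}$ quasi-norm of this operator via \cref{weak_holder} and \Cref{Cwikel-Bir-Sol}. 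The gain you were looking for comes from the scalar factor $(r(\xi)+\lambda(2m-\lambda))^{-1}$ in \cref{resolvent}: since $z_-$ touches $-m$ only at $\xi=\mathbf 0$, where $r$ vanishes quadratically (the geometric fact you correctly identified in step (2)), a coarea estimate gives $\|(r+\lambda(2m-\lambda))^{-1}\|_{L^{2n/\gamma}(\T^n)}=O(\lambda^{\gamma/4-1})$, whence the Birman--Schwinger operator has quasi-norm $O(\lambda^{\gamma/4-1})$ and its counting function at $s=1$ is $O(\lambda^{-n/\gamma+n/4})=o(\lambda^{-n/\gamma})$. In short, the single nondegenerate touching point must be exploited \emph{inside} the Birman--Schwinger operator, where it multiplies the potential; once you separate $H_0$ from $V$ at spectral scale $\lambda$, that information is lost and the bound you need is unattainable.
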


\begin{proof} Define the function $W:\mathcal X\to \R^+$ by 
$w_j(\mu)=\langle \mu\rangle^{-\gamma}$, where we are using the notation of \cref{eq:littlev}.  From \cref{31may21}  there exist a constant $C>0$ such that $|V|\leq C W$.  Denote by $W_\kappa:=C(1+\kappa^{-1}) W$. Then it can been seen that (again as in the proof of [Theorem 4.1(ii)]\cite{PuRo11})
$$
\mathcal N((-m+\lambda,0); P^\perp(H_0+V\pm\kappa^{-1}|V|)P^\perp)\leq \mathcal N((-m+\lambda,0); P^\perp(H_0+W_\kappa)P^\perp)+O(1).
$$
Now, by the Birman-Schwinger principle (see for instance \cite{Kla82,Push09}), we get for $\lambda\in(0,m)$ \small
\begin{equation*}
\mathcal N((-m+\lambda,0); P^\perp(H_0+W_\kappa)P^\perp)=n_+(1;P^\perp W_\kappa^{1/2}P^\perp (H_0+m-\lambda)^{-1}P^\perp W_\kappa^{1/2}P^\perp)+O(1).
\end{equation*}
\normalsize
Define the $(n+1)\times (n+1)$ matrix 
$$
 M_{R}:= \begin{pmatrix}
\lambda &a_1&\cdots&a_n \\[.5em]
\overline{a_1}&\lambda-2m&\cdots&0\\[.5em]
\vdots&\vdots&\ddots&\vdots\\
\overline{a_n}&0&\cdots&\lambda-2m
\end{pmatrix}.
$$

Then, from \cref{resolvent} and  \cref{Le4}  it is not difficult to  see that 
for $\lambda\in(0,m)$
\begin{equation*}
 (H_0+m-\lambda)^{-1}P^\perp= 
\U^*\frac{ {M}_{R}}{r+\lambda(2m-\lambda)} ({\rm Id}-M)\U ,
\end{equation*}
where ${\rm Id}$ denotes the identity $(n+1)\times(n+1)$  matrix. Furthermore, the operator $W_\kappa^{1/2}P^\perp (H_0+m-\lambda)^{-1}P^\perp W_\kappa^{1/2}$ is obviously compact and from 
\cref{weak_holder}
\begin{multline*}\label{15jun21bis}
\|W_\kappa^{1/2}P^\perp (H_0+m-\lambda)^{-1}P^\perp W_\kappa^{1/2}\|_{n/\gamma,w}\leq C \|W_\kappa^{1/2}\U^*\frac{1}{r+\lambda(2m-\lambda)}\|_{2n/\gamma,w} \\
\times \|{M}_R({\rm Id}- M)\U W_\kappa^{1/2}\|_{2n/\gamma,w}.
\end{multline*}
\normalsize
Consider the operator $ W_\kappa^{1/2}\U^*\frac{1}{r+\lambda(2m-\lambda)}$. Since $\frac{1}{r+\lambda(2m-\lambda)}$ is bounded, it is in $L^p(\T^n)$ for any $p>1$. Further,   each component of  the multiplication operator  $W_\kappa^{1/2}$ is in $l_{2n/\gamma,w}$. Then, since $2n/\gamma>2$,  by \cref{Cwikel-Bir-Sol},
$$\left\| W_\kappa^{1/2}\U^*\frac{1}{r+\lambda(2m-\lambda)} \right\|_{2n/\gamma,w}\leq C \left\| \frac{1}{r+\lambda(2m-\lambda)} \right\|_{L^{\frac{2n}\gamma}(\T^n)}\|W_\kappa^{1/2}\|_{\oplus_{l=0}^nl_{2n/\gamma},w}.$$
To estimate  the $L^{\frac{2n}\gamma}$ norm we use the coarea formula 
\begin{align*}\int_{\T^n} |(r+\lambda(2m-\lambda))^{-1}|^{2n/\gamma}=&\int_{0}^{1/2}\frac{1}{(\rho+\lambda(2m-\lambda))^{2n/\gamma}}\int_{r(\xi)=\rho}\frac{ 1}{|\nabla r(\xi)|} d\xi d\rho 
\\ \leq &C   \int_{0}^{1/2}\frac{1}{(\rho+\lambda(2m-\lambda))^{2n/\gamma}} \int_{r(\xi)=\rho}\frac{ 1}{r(\xi)^{1/2}} d\xi d\rho 
\\ \leq &C   \int_{0}^{1/2}\frac{1}{(\rho+\lambda(2m-\lambda))^{2n/\gamma}} \frac{1}{\rho^{1/2}}\int_{r(\xi)=\rho} d\xi d\rho 
\\ \leq &C   \int_{0}^{1/2}\frac{\rho^{n/2-1}} {(\rho+\lambda(2m-\lambda))^{2n/\gamma}}  d\rho 
\\ \leq &  C \lambda^{-2n/\gamma+n/2},
\end{align*}
where in the first and third inequalities  have used \cref{|a_j|^2}. 
Analogously,  $$\left\| {M}_R({\rm Id}-M)\U W_\kappa^{1/2}  \right\|_{2n/\gamma,w}\leq C \|W_\kappa^{1/2}\|_{\oplus_{l=0}^nl_{2n/\gamma},w},$$ 
since the matrix ${M}_R({\rm Id}- M)$  is bounded with uniform bound in $\lambda$. 
Putting all this together we obtain 
$$\|W_\kappa^{1/2}P^\perp (H_0+m-\lambda)^{-1}P^\perp W_\kappa^{1/2}\|_{n/\gamma,w}\leq  C \lambda^{\gamma/4-1},$$
which is equivalent to say  that 
\begin{align*}n_*(s; W_\kappa^{1/2}P^\perp (H_0+m-\lambda)^{-1}P^\perp W_\kappa^{1/2})\leq 
 & C \lambda^{-n/\gamma+n/4}\\
 =&o(\lambda^{-n/\gamma}).\qedhere
\end{align*}
\end{proof}

\subsection{Eigenvalue counting function for the effective Hamiltonian}

From \cref{20fev2024b,Le1} 
\begin{equation}\label{21fev2024}
\pm\mathcal N(\lambda)\leq \pm \mathcal N((\lambda,m); P(V\pm\kappa|V|)P)+o(\lambda^{-n/\gamma}), \quad \lambda\downarrow 0\ .
\end{equation}
Then, we are led to study the distribution of positives eigenvalues of the compact operator $P(V\pm\kappa|V|)P$. 

For ease of notation, for any $\kappa >0$  we define $T_\kappa^\pm$ in $\mathfrak{S}_\infty(\mathcal{H})$ by
 \begin{equation*}T^\pm_\kappa:=\U P(V\pm\kappa|V|)P\U^*=M\U(V\pm\kappa|V|)\U^*M\ .
\end{equation*}

\begin{Proposition}\label{T2} 
For an admisible $V$ we have 
 $$ n_+(\lambda;T^\pm_\kappa)=\Big(\frac{1\pm\kappa}{\lambda}\Big)^{n/\gamma}\tau_n\int_{\T^n}  {\rm Tr}\left( {(M(\xi)\Gamma{M(\xi)})^{n/\gamma}} \right) d\xi\,(1+o(1)),\quad \lambda\downarrow 0\ .$$ 
\end{Proposition}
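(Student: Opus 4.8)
The plan is to reduce the computation of $n_+(\lambda;T^\pm_\kappa)$ to a known result on the spectral asymptotics of an operator of the form $\psi(\F) g \psi(\F)$ on $L^2(\T^n)$, where $g$ is a slowly decaying potential on $\Z^n$ and $\psi$ encodes a pseudodifferential-type symbol. First I would observe that since $|V|\le CW$ with $w_j(\mu)=\langle\mu\rangle^{-\gamma}$, and more precisely by the admissibility hypothesis \eqref{eq:reduccion}, the matrix-valued potential $\U(V\pm\kappa|V|)\U^*$ is, up to lower-order terms, multiplication by $\F^{-1}$ of the diagonal matrix $(1\pm\kappa)\langle\mu\rangle^{-\gamma}\Gamma$ (here $\Gamma$ as in \eqref{eq:Gammadef} picks out exactly the positive $\Gamma_j$, which are the ones contributing to $n_+$). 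The operator $T^\pm_\kappa = M\,\U(V\pm\kappa|V|)\U^*\,M$ then has leading part $(1\pm\kappa)\, M(\xi)\,\bigl(\F\langle\cdot\rangle^{-\gamma/2}\bigr)\,\Gamma\,\bigl(\langle\cdot\rangle^{-\gamma/2}\F^{-1}\bigr)\,M(\xi)$, and the error terms (from the $o(1)$ in \eqref{eq:reduccion}, from the non-positive $\Gamma_j$, and from the difference $|V|-V$ where relevant) should be controlled in $\mathfrak{S}_{n/\gamma,w}$ with vanishing quasi-norm as the relevant cutoff goes to infinity, so that by the Weyl/Ky Fan inequalities \eqref{weyl1}--\eqref{kyfan1} together with \Cref{Cwikel-Bir-Sol} they do not affect the leading asymptotics.

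Next I would handle the leading term. Since $M(\xi)$ is a projection-valued symbol ($M(\xi)^2=M(\xi)$, as one checks from \eqref{eq:Mpositiva} using \eqref{varrho}), and $\Gamma$ is a constant positive-semidefinite diagonal matrix, the relevant counting function is that of the positive operator $(1\pm\kappa)\,\Gamma^{1/2}M(\xi)\langle\mu\rangle^{-\gamma}M(\xi)\Gamma^{1/2}$ conjugated appropriately; the nonzero positive eigenvalues of $M\Gamma M$ coincide (with multiplicity) with those of $\Gamma^{1/2}M\Gamma^{1/2}$, and the scalar weight $\langle\mu\rangle^{-\gamma}$ behaves like $|\mu|^{-\gamma}$ on $\Z^n$. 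This is exactly the situation covered by the Birman--Solomyak / Cwikel-type spectral asymptotics for operators $\phi(\F)|\mu|^{-\gamma}\phi(\F)^*$ — the discrete analogue of the asymptotics used for the Landau Hamiltonian in \cite{Ra90} (see \Cref{Re:Landau}) — which gives a "semiclassical" leading term
$$
n_+(\lambda;T^\pm_\kappa) = \Big(\tfrac{1\pm\kappa}{\lambda}\Big)^{n/\gamma}\,\tau_n\!\int_{\T^n}\!{\rm Tr}\bigl((M(\xi)\Gamma M(\xi))^{n/\gamma}\bigr)\,d\xi\;(1+o(1)).
$$
The factor $\tau_n$ (volume of the unit sphere) and the exponent $n/\gamma$ arise from the phase-space volume computation $\#\{\mu\in\Z^n: c\,|\mu|^{-\gamma}>\lambda\}\sim \tau_n n^{-1}(c/\lambda)^{n/\gamma}$ combined with the fibrewise eigenvalue scaling; the combinatorial constant matches after the coarea-type computation already illustrated in the proof of \Cref{Le1}.

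The main obstacle, I expect, is twofold. First, one must make rigorous the passage from the genuine potential to its leading diagonal model: the symbol estimates \eqref{31may21} guarantee membership in the right weak Schatten class and smallness of remainders, but bookkeeping the matrix structure (off-diagonal $v_j$-behaviour, the projection $M(\xi)$ degenerating where $r(\xi)\to 0$ near $\xi=\mathbf 0$) requires care — in particular one needs that the singularity of $M(\xi)$ at $\mathbf 0$ is integrable to the power $n/\gamma$, which follows from the homogeneity $|a_j(\xi)|^2\asymp|\xi_j|^2$ near $0$ and $\gamma<n$, as in \Cref{Le1}. Second, one needs the two-sided asymptotic (not just an upper bound), which forces a matching lower bound; the cleanest route is to invoke a known theorem (e.g. the Birman--Solomyak variational results, or the framework of \cite{Ra90, PuRo11}) that directly yields the asymptotic equality for $\psi(\F)g\psi(\F)^*$ with $g\in S^\gamma$ satisfying \eqref{eq:reduccion}, applied fibrewise in $\xi$ and then integrated via dominated convergence over $\T^n$. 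Once that black box is in place, the remaining steps are the routine reductions sketched above.
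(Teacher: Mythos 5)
Your surrounding reductions are essentially the right ones, and they match the paper's ingredients: replace $V\pm\kappa|V|$ by the diagonal model $(1\pm\kappa)\Gamma\langle\mu\rangle^{-\gamma}$ using \cref{eq:reduccion}, control the remainders in $\mathfrak{S}_{n/\gamma,w}$ via \Cref{Cwikel-Bir-Sol} together with \cref{weyl1,kyfan1}, and extract the order $\lambda^{-n/\gamma}$ from the lattice count \cref{18nov22}. The genuine gap is exactly the step you defer to a black box: the two--sided asymptotics for the model operator $M\,\U\Gamma(1\pm\kappa)\langle\mu\rangle^{-\gamma}\U^*\,M$ with the \emph{variable} matrix symbol $M(\xi)$. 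There is no off--the--shelf theorem in this discrete, matrix--valued setting that you can simply quote: \cite{Ra90} and \cite{PuRo11} concern the continuous magnetic/Landau problem, and the classical Birman--Solomyak asymptotics for $\psi(x)\phi(D)$ would have to be re-established on $\Z^n\times\T^n$ with matrix weights, which is precisely the content the paper supplies in \Cref{2,3}. Moreover, the mechanism you propose for this step --- applying the asymptotics ``fibrewise in $\xi$ and then integrating by dominated convergence'' --- cannot work as stated: after conjugation by $\U$ the operator is multiplication by $M(\xi)$ sandwiched around a Fourier multiplier in $\mu$, so it is \emph{not} decomposable over the $\xi$--fibers (the weight $\langle\mu\rangle^{-\gamma}$ mixes all fibers), and no fibrewise eigenvalue computation is available; the integral over $\T^n$ in the coefficient does not arise from a direct--integral decomposition but from a phase--space argument. (A minor related slip: your count $\#\{\mu: c|\mu|^{-\gamma}>\lambda\}\sim \tau_n n^{-1}(c/\lambda)^{n/\gamma}$ with $\tau_n$ a sphere area is inconsistent with the coefficient $\tau_n$ in your displayed formula and in \cref{18nov22}; the normalization of $\tau_n$ must be fixed once and used coherently.)

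What the paper does to fill this hole, following \cite{MPR23} and ultimately \cite{BS70}, is a cube--decomposition (anti--Wick type) argument: approximate $M$ in $L^p(\T^n)$, $p>2$, by a step matrix function $B_\varepsilon=\sum_j B_{\varepsilon,j}\mathds{1}_{\Box_{\varepsilon,j}}$, with \Cref{Cwikel-Bir-Sol} giving $\|T^\pm_\kappa-B_\varepsilon\Ve B_\varepsilon^*\|_{n/\gamma,w}\leq C\varepsilon$; discard the off--diagonal cube interactions $\mathds{1}_{\Box_j}\Ve\mathds{1}_{\Box_l}$, $j\neq l$, by \Cref{2}; and on each cube, where the symbol is the constant matrix $B_{\varepsilon,j}$, use translation invariance of the model potential and \cref{weyl1} to reduce to the explicit eigenvalue count for $B_{\varepsilon,j}\Gamma(1\pm\kappa)B_{\varepsilon,j}^*\,\langle\mu\rangle^{-\gamma}$, which produces the Riemann sum $\frac{\tau_n}{q^n}\sum_j{\rm Tr}\bigl((B_{\varepsilon,j}\Gamma(1\pm\kappa)B_{\varepsilon,j}^*)^{n/\gamma}\bigr)$ converging to the trace integral as $\varepsilon\downarrow 0$ (\Cref{3}). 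To complete your proof you would need either to carry out this localization argument (or an equivalent variational one yielding matching upper and lower bounds), or to locate and verify a citable discrete matrix--valued analogue; as written, the central asymptotic equality is assumed rather than proved.
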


In order to proof this \nameCref{T2} we follow the ideas of \cite[Theorem 6.1]{MPR23}, which  in turn are inspired by the proof of  \cite[Theorem 1]{BS70}. By analogy, we denote $\Box:=[0,1)^n\subset\R^n$ and hence $\mathcal H\cong \oplus^n_{j=0}L^2(\Box)$. Finally, for ease of notation, let us set $\Ve=\U(V\pm\kappa|V|)\U^*$.

\begin{Remark}
The statement of \Cref{T2} is particular to our effective Hamiltonian and problem. However, in the proof we use only that $M\in L^p(\T^d;\C^{n+1})$ for $p>2$ and we could also replace $V\pm\kappa|V|$ with another potential satisfying \Cref{31may21,eq:reduccion}. A similar statement holds for $n_-$.
\end{Remark}

\begin{Lemma}\label{2} Let $X$ and $Y$ be two subsets of $\Box$ with no interior points in common. Then $$n_*(r; \mathds{1}_X\Ve\mathds{1}_Y)=o(r^{-n/\gamma}),\quad r\downarrow 0.$$
\end{Lemma}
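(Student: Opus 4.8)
\textbf{Proof plan for Lemma \ref{2}.}

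The plan is to exploit the off-diagonal (non-overlapping support) structure of the operator $\mathds{1}_X\Ve\mathds{1}_Y$ through an integration-by-parts / smoothing argument: because $X$ and $Y$ have disjoint interiors, the kernel of the multiplication-by-$\Ve$ operator sandwiched between these cutoffs decays fast after Fourier transform, and the slow decay of $V$ translates into compactness estimates better than the critical power $n/\gamma$. First I would recall that $\Ve = \U(V\pm\kappa|V|)\U^*$ acts fiberwise as a matrix-valued multiplication operator whose entries are (linear combinations of) the Fourier multipliers associated to the sequences $v_j \in S^\gamma(\Z^n)$, and that on the physical side $\mathds{1}_X, \mathds{1}_Y$ are restrictions of $L^2(\Box)$ to subsets of the fundamental domain. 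The key point is that the operator $\mathds{1}_X \F v \mathds{1}_Y$ — more precisely each scalar component — is, after undoing the unitaries, a (discrete) pseudodifferential-type operator with amplitude supported away from the diagonal in the $\Box\times\Box$ variable, so repeated summation by parts using the symbol estimates \cref{31may21} gains arbitrarily many powers of decay.

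The main steps, in order, are: (1) reduce to scalar components, so it suffices to show $n_*(r;\mathds{1}_X \F v \mathds{1}_Y) = o(r^{-n/\gamma})$ for each $v\in S^\gamma(\Z^n)$; (2) split $v = v^{(R)} + v^{(R)}_\infty$ where $v^{(R)}$ is supported in $\{|\mu|\le R\}$ and the tail satisfies $|v^{(R)}_\infty(\mu)|\le \varepsilon_R \langle\mu\rangle^{-\gamma}$ with $\varepsilon_R\to 0$; the finite-rank part contributes $O(1)$ to $n_*$ by the Weyl/Ky Fan inequalities \cref{kyfan1}, so only the tail matters and it suffices to obtain a bound $o(r^{-n/\gamma})$ uniformly; (3) for the tail, use the disjointness of $X$ and $Y$: since $\mathrm{dist}$ from a point of $X$ to a point of $Y$ is nonzero except on a null set, write $(e^{-2\pi i \xi\cdot\mu})$ with $\xi$ ranging over $X$ tested against $Y$ and perform summation by parts in $\mu$ against the difference operators $\mathrm D^\alpha$, each step producing a factor that is (morally) a bounded function on $X\times Y$ times $\mathrm D^\alpha v$, which by \cref{31may21} is again a symbol of order $\gamma+|\alpha|$, hence lies in $l_{p,w}$ for every $p > n/(\gamma+|\alpha|)$; (4) apply the Cwikel–Birman–Solomyak estimate of \Cref{Cwikel-Bir-Sol} with this improved $p$: choosing $|\alpha|\ge 1$ gives $\mathrm D^\alpha v \in l_{p,w}$ for some $p$ strictly smaller than $n/\gamma$, so that the resulting operator is in $\mathfrak S_{p,w}$ with $p<n/\gamma$, whence $n_*(r;\cdot) = O(r^{-p}) = o(r^{-n/\gamma})$; (5) reassemble: the finite-rank correction is $O(1)$, the tail is $o(r^{-n/\gamma})$, and letting the decomposition parameter go to infinity closes the estimate. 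One must be slightly careful that the ``bounded function on $X\times Y$'' picked up at each summation by parts is genuinely bounded — this is where disjointness of the interiors enters, guaranteeing the relevant geometric series / difference quotients of $\frac{1}{e^{2\pi i(\xi-\eta)_j}-1}$-type factors do not blow up — and this is the step I expect to be the main obstacle, since it requires a clean version of ``summation by parts against a cutoff to a region away from the diagonal'' on the lattice.

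The alternative, perhaps cleaner, route — and the one I would try first in the write-up — is to avoid the by-parts bookkeeping by invoking directly that $\F v \in \mathfrak S_{n/\gamma,w}$ is in the \emph{separable} part, i.e.\ that symbols of order $\gamma$ can be approximated in $\|\cdot\|_{n/\gamma,w}$ by finitely-supported sequences up to an arbitrarily small error, combined with the observation that for a finitely-supported $v$ the operator $\mathds{1}_X(\F v)\mathds{1}_Y$ has, after one more approximation, arbitrarily small $\mathfrak S_{n/\gamma,w}$-quasinorm because the ``interaction kernel'' between $X$ and $Y$ can be made to vanish. Concretely: by \Cref{Cwikel-Bir-Sol}, $\|\mathds{1}_X \F v \mathds{1}_Y\|_{n/\gamma,w}\le C\|v\|_{l_{n/\gamma,w}}\|\mathds{1}_X\|_{L^{n/\gamma}}$, but to get the little-$o$ one splits $v$ as above, handles the finite-rank piece by \cref{kyfan1}, and for the tail piece notes that one may further replace $\mathds 1_Y$ by a smooth function vanishing on $X$, at which point summation by parts gains one order and the Cwikel bound with exponent $<n/\gamma$ applies; then $\limsup_{r\downarrow 0} r^{n/\gamma} n_*(r;\mathds 1_X\Ve\mathds 1_Y) \le C\varepsilon_R^{n/\gamma}$ for all $R$, giving the claim. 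Either way the crux is quantifying ``$X$ and $Y$ share no interior points'' as a genuine decay gain, and I would present the by-parts computation in the tail as the technical heart of the lemma.
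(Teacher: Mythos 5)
Your guiding idea --- that the disjointness of $X$ and $Y$ should be converted into extra decay by summation by parts against the symbol estimates \cref{31may21} --- is indeed the heart of the matter, and your reduction to scalar components and the $O(1)$ treatment of finitely supported pieces via \cref{kyfan1} are fine. However, two load-bearing claims are false as stated. First, the splitting $v=v^{(R)}+v^{(R)}_\infty$ with $|v^{(R)}_\infty(\mu)|\le\varepsilon_R\langle\mu\rangle^{-\gamma}$ and $\varepsilon_R\to 0$ is impossible for an admissible perturbation: by \cref{eq:reduccion} the tail is of size exactly $|\Gamma_j|\langle\mu\rangle^{-\gamma}$ for some $\Gamma_j\neq 0$, so $\varepsilon_R\to|\Gamma_j|\neq0$. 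Equivalently, $v$ is \emph{not} in the separable part of $l_{n/\gamma,w}$: if it were approximable by finitely supported sequences in the weak quasinorm, its counting function would be $o(\lambda^{-n/\gamma})$, contradicting \cref{18nov22}. This also invalidates the premise of your ``alternative cleaner route''. The smallness needed for the lemma cannot come from truncating the potential at all.

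Second, the factor produced by each summation by parts, essentially $(e^{2\pi i(\xi-\eta)_j}-1)^{-1}$, is \emph{not} bounded on $X\times Y$ when $X$ and $Y$ merely have disjoint interiors: they may share boundary points, and $\xi-\eta$ can be arbitrarily small (also modulo $\Z^n$, because of wrap-around on the torus). You correctly flag this as the main obstacle, but disjointness of interiors alone cannot remove it. The missing step --- and the place where \Cref{Cwikel-Bir-Sol} actually enters in the argument the paper refers to (\cite[Lemma 6.4]{MPR23}) --- is to approximate the \emph{cutoffs}, not the potential: choose $f,g$ with supports at positive distance in $\T^n$ and $\|\mathds{1}_X-f\|_{L^{2n/\gamma}}+\|\mathds{1}_Y-g\|_{L^{2n/\gamma}}<\varepsilon$ (possible since $X\cap Y$ has empty interior, hence the boundary layer has small measure), write $\mathds{1}_X\Ve\mathds{1}_Y=(\mathds{1}_X-f)\Ve\mathds{1}_Y+f\Ve(\mathds{1}_Y-g)+f\Ve g$, control the two error terms in $\mathfrak S_{n/\gamma,w}$-quasinorm by $C\varepsilon$ via a factorization through $|V|^{1/2}$ together with \Cref{Cwikel-Bir-Sol}, \cref{weak_holder} and \cref{kyfan1}, and only then run the summation-by-parts argument on $f\Ve g$, where the positive distance between the supports makes the kernel $f(\xi)\bigl(\sum_\mu v(\mu)e^{-2\pi i(\xi-\eta)\cdot\mu}\bigr)g(\eta)$ smooth and hence the singular values rapidly decaying, so $n_*(r;f\Ve g)=o(r^{-n/\gamma})$; letting $\varepsilon\downarrow0$ closes the estimate. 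Note also that \Cref{Cwikel-Bir-Sol} as stated applies to one-sided products $f\F v$ with $p>2$, so after summation by parts, where the gained factor depends jointly on $\xi$ and $\eta$, you cannot directly quote it with an improved exponent $p<n/\gamma$; the smooth-kernel route for the well-separated main term avoids this difficulty.
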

\begin{proof} The proof uses \cref{Cwikel-Bir-Sol} and is almost equal to the proof of  \cite[Lemma 6.4]{MPR23}.
\end{proof}

\begin{Lemma}\label{3} Let $\{\Box_j\}$ be a partition of $\Box$ into cubes of equal size $1/ q^n$, $q\in\Z_+$,  and let $\{B_j\}_{j=1}^{q^n}$ be  matrices in $M_{(n+1)\times(n+1)}(\C)$.
Let $\check T_\kappa^\pm:\oplus^n_{j=0}L^2(\Box)\to \oplus^n_{j=0}L^2(\Box)$ be the operator defined by 
$$
\check T^\pm_\kappa=\sum_j B_j \mathds{1}_{
\Box_j}  \Ve\mathds{1}_{\Box_j} B_j^*.
$$
Then, for any $\delta \in (0,1)$, 
\begin{align*}
&\frac{\tau_n}{q^n}\sum_{j}
{\rm Tr}((B_j\Gamma(1\pm\kappa) B_j^*)^{n/\gamma})
\,(\lambda(1+\delta))^{-n/\gamma} (1+o(1))\\
\leq &n_+(\lambda;\check{T}^\pm_\kappa)\\
  \leq &
\frac{\tau_n}{q^n}\sum_{j}
{\rm Tr}((B_j\Gamma(1\pm\kappa) B_j^*)^{n/\gamma})
\,(\lambda(1-\delta))^{-n/\gamma} (1+o(1)),\quad \lambda\downarrow 0.
\end{align*}
\end{Lemma}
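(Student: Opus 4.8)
\textbf{Proof plan for Lemma \ref{3}.} The statement concerns the operator $\check T^\pm_\kappa=\sum_j B_j\mathds{1}_{\Box_j}\Ve\mathds{1}_{\Box_j}B_j^*$, which is a \emph{block-diagonal} operator once we note that $\mathds 1_{\Box_j}\Ve\mathds 1_{\Box_j}$ are mutually ``almost orthogonal'': the cross terms $\mathds 1_{\Box_j}\Ve\mathds 1_{\Box_k}$ with $j\neq k$ contribute only $o(r^{-n/\gamma})$ by \Cref{2}. So the first step is to split $\check T^\pm_\kappa$ into its diagonal part $\sum_j B_j\mathds 1_{\Box_j}\Ve\mathds 1_{\Box_j}B_j^*$ acting genuinely as a direct sum over the cells $\Box_j$, plus an error handled by the Ky Fan inequality \Cref{kyfan1} together with \Cref{2}; this reduces the count of $n_+(\lambda;\check T^\pm_\kappa)$, up to $o(\lambda^{-n/\gamma})$, to $\sum_j n_+(\lambda; B_j\mathds 1_{\Box_j}\Ve\mathds 1_{\Box_j}B_j^*)$. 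The number of cells $q^n$ is fixed, so finitely many error terms, each $o(\lambda^{-n/\gamma})$, sum to $o(\lambda^{-n/\gamma})$.

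The second step is to analyze a single cell. On $\Box_j$, the operator $\mathds 1_{\Box_j}\Ve\mathds 1_{\Box_j}$ is (conjugate to) multiplication by $V\pm\kappa|V|$ restricted to a translate of $q^{-1}\Z^n$; using the admissibility hypothesis \Cref{eq:reduccion}, on the support away from the origin one has $v_l(\mu)=(1\pm\kappa)\langle\mu\rangle^{-\gamma}(\Gamma_l+o(1))$ (with the $o(1)$ uniform on the relevant tail), so $B_j\mathds 1_{\Box_j}\Ve\mathds 1_{\Box_j}B_j^*$ is, to leading order, the compact operator with symbol $B_j\Gamma(1\pm\kappa)B_j^*$ times $\langle\mu\rangle^{-\gamma}$. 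The spectral asymptotics of such a ``Weyl-type'' operator — multiplication by a slowly decaying potential $\langle\mu\rangle^{-\gamma}$ sandwiched between fixed matrices — is classical: the counting function behaves like $(\text{const}/\lambda)^{n/\gamma}$ with constant $\tau_n\,\mathrm{Tr}((B_j\Gamma(1\pm\kappa)B_j^*)^{n/\gamma})/q^n$, the factor $q^{-n}$ being the volume of $\Box_j$. This is exactly the content that was established in \cite[Theorem 6.1]{MPR23} (following \cite{BS70}); I would either cite it directly for the single-cell model operator or, if a self-contained argument is wanted, reprove it by a Tauberian/variational comparison: bound $v_l$ above and below by $(1\pm\kappa)(1\pm\varepsilon)\langle\mu\rangle^{-\gamma}$ outside a finite set, use Weyl's inequality \Cref{weyl1} to absorb the finite-rank discrepancy, and apply the known asymptotics for the exact power-decay potential, then let $\varepsilon\downarrow 0$. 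The $(1\pm\delta)$ in the statement is precisely the slack one picks up from replacing the $o(1)$ in \Cref{eq:reduccion} by a genuine $\pm\varepsilon$ and from the discretization on the sublattice $q^{-1}\Z^n$ (whose density relative to $\Z^n$ contributes the $q^{-n}$).

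The third step is bookkeeping: sum the single-cell asymptotics over $j=1,\dots,q^n$. Since the upper and lower bounds in each cell carry a common factor $(\lambda(1\mp\delta))^{-n/\gamma}(1+o(1))$, and there are finitely many cells, additivity of $n_+$ for a direct sum gives the two-sided bound in the statement verbatim. The main obstacle is the single-cell asymptotics — getting the constant right, with the trace of the $n/\gamma$-th power of $B_j\Gamma(1\pm\kappa)B_j^*$ and the volume factor $q^{-n}$, and ensuring the $o(1)$ errors from \Cref{eq:reduccion} are uniform enough to be swallowed into the $(1\pm\delta)$ margin. Everything else (the cross-term estimate, the summation) is routine given \Cref{2}, \Cref{kyfan1}, \Cref{weyl1}, and \Cref{Cwikel-Bir-Sol}.
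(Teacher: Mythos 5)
Your overall architecture (reduce to single cells, replace the potential by the model $\Gamma(1\pm\kappa)\langle\mu\rangle^{-\gamma}$ using \cref{eq:reduccion} plus \cref{weyl1}, then sum over the $q^n$ cells) matches the paper, but the decisive step is not correct as you state it. First, a minor point: no cross terms need to be handled at the outset. Since each $B_j$ acts only on the $\C^{n+1}$ factor it commutes with $\mathds{1}_{\Box_j}$, so $\check T^\pm_\kappa$ is \emph{exactly} block diagonal over the orthogonal subspaces $L^2(\Box_j,\C^{n+1})$ and $n_+(\lambda;\check T^\pm_\kappa)=\sum_j n_+(\lambda;B_j\mathds{1}_{\Box_j}\Ve\mathds{1}_{\Box_j}B_j^*)$ with no error; \Cref{2} is not needed for this. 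The genuine gap is in your single-cell analysis: the claim that $\mathds{1}_{\Box_j}\Ve\mathds{1}_{\Box_j}$ is conjugate to multiplication by $V\pm\kappa|V|$ restricted to a (translate of a) sublattice is false. The operator $\Ve=\U(V\pm\kappa|V|)\U^*$ is a convolution-type operator on $L^2(\T^n)$, and its compression to $L^2(\Box_j)$ is a truncated Toeplitz-type operator: in the natural Fourier basis of the small cube its matrix has substantial off-diagonal entries, and compressions do not preserve spectra, so no unitary equivalence with a multiplication operator on a lattice is available. (Even the heuristic is off: $q^{-1}\Z^n$ is $q^n$ times \emph{denser} than $\Z^n$, so a density count would produce $q^{+n}$, not the volume factor $q^{-n}$.) Consequently the per-cell asymptotics with the factor $q^{-n}$ — which is the entire content of the lemma — is left unproven. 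Citing \cite[Theorem 6.1]{MPR23} does not fill it, because that result is the analogue of the global statement (\Cref{T2}), not of the compressed, per-cell operator; and your variational comparison with $(1\pm\varepsilon)\langle\mu\rangle^{-\gamma}$ only justifies replacing $v_j$ by the model potential, not the localization to $\Box_j$.

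What the paper actually does at this point is the Birman--Solomyak localization trick: since $\Ve$ commutes with translations of $\T^n$, the $q^n$ compressions $B\mathds{1}_{\Box_j}\Ve\mathds{1}_{\Box_j}B^*$ (with a fixed matrix $B$) are mutually unitarily equivalent, and by \cref{weyl1}, \cref{kyfan1} and \Cref{2} their sum differs from the full operator $B\Ve B^*$ only by the $(1\pm\delta)$ spectral-parameter slack and an $o(\lambda^{-n/\gamma})$ term; this yields
\begin{equation*}
n_+\bigl(\lambda;B\mathds{1}_{\Box_0}\Ve\mathds{1}_{\Box_0}B^*\bigr)\;\leq\;\frac{1}{q^n}\,n_+\bigl(\lambda(1-\delta);B\Ve B^*\bigr)+o(\lambda^{-n/\gamma}),
\end{equation*}
and the reverse bound, which is where the factor $q^{-n}$ and the $\delta$'s really come from. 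The asymptotics of the full operator $B\Ve B^*$ are then obtained by a Weyl-inequality comparison with $\hat V^\pm_0=\U\,\Gamma(1\pm\kappa)\langle\mu\rangle^{-\gamma}\,\U^*$, whose sandwich $B\hat V^\pm_0B^*$ has explicitly computable eigenvalues $\beta_l(1\pm\kappa)\langle\mu\rangle^{-\gamma}$ (with $\beta_l$ the eigenvalues of $B\Gamma B^*$), giving the trace constant via \cref{18nov22}. To repair your proposal you would need to insert this translation-invariance comparison (or an equivalent almost-orthogonality argument) in place of the claimed sublattice conjugation; the remaining steps you outline are then fine.
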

\begin{proof} We will show the proof of the upper bound. The lower bound is similar. Let $B_0$ be a constant $(n+1)\times(n+1)$ matrix. Then for any $\delta\in(0,1)$
\begin{align*}
n_+(\lambda;B_0\Ve B_0^*)\geq  &\sum_jn_+(\lambda(1+\delta);B_0\mathds{1}_{
\Box_j} \Ve \mathds{1}_{
\Box_j}  B_0^*)
-n_-(\lambda\delta;\sum_{j\neq l}B_0\mathds{1}_{
\Box_j} \Ve\mathds{1}_{
\Box_l}  B_0^*)\\
=&q^n n_+(\lambda(1+\delta);B_0\mathds{1}_{
\Box_0} \Ve\mathds{1}_{
\Box_0}  B_0^*) +o(\lambda^{-n/\gamma}),
\end{align*}
where for the inequality we used \Cref{weyl1}. For the equality we used first  the fact that each operator $B_0\mathds{1}_{
\Box_j} \Ve\mathds{1}_{
\Box_j}  B_0^*$ is unitary equivalent to $B_0\mathds{1}_{
\Box_0} \Ve\mathds{1}_{
\Box_0}  B_0^*$, for $\Box_0=(0,1/q)^n$. Then we used and \Cref{2,kyfan1}. It follows that 

\begin{equation}\label{eq:primeracota}
n_+(\lambda;B_0\mathds{1}_{
\Box_0} \Ve\mathds{1}_{
\Box_0}  B_0^*) \leq \frac{1}{q^n} n_+(\lambda(1-\delta);B_0\Ve B_0^*)+o(\lambda^{-n/\gamma})\ .
\end{equation}

Define $v_*(\mu)=\langle \mu\rangle^{-\gamma}$.  One can check that 
\begin{equation}\label{18nov22}
\#\{\mu\in\Z^n:v_*(\mu)>\lambda\}=\tau_m \lambda^{-n/\gamma}(1+o(1)),\quad \lambda\downarrow 0\ ,
\end{equation}
see for instance \cite[Prop. 2 XIII.15]{RS78}.

From \cref{eq:reduccion} set $\hat V^\pm_0:=\U \Gamma(1\pm\kappa) \langle \mu \rangle^{-\gamma}\U^*$ and use the Weyl inequalities \cref{weyl1}  to obtain that for $\tilde \delta\in(0,1)$
$$
n_+(\lambda;B_0 \Ve B_0^*)\\
\leq n_+
(\lambda(1-\tilde{\delta});B_0 \hat V^\pm_0 B_0^*)+ n_+(\lambda\tilde{\delta};B_0 (\Ve-\hat V^\pm_0) B_0^*).
$$
Now, denote  by $\{\beta_{0,l}\}$ the eigenvalues of the matrix $B_0\Gamma B_0^*$. We have that  the eigenvalues of $B_0 \hat V_0^\pm B_0^*$ are given by 
$$
\{\beta_{0,l}\, v_*(\mu):1\leq l\leq k, \mu \in \Z^d \}.
$$
Thus \cref{18nov22} implies that
\begin{align*}
n_+(\lambda;B_0 \hat V_0^\pm B_0^*)&=\#\{ 1\leq l\leq k, \mu \in \Z^n:  \beta_{0,l}\, v_*(\mu)>\lambda \}\\
&=\sum_{\beta_{0,l}>0} n_+(\lambda/\beta_{0,l};v_*)\\
&=\Big(\tau_n\sum_{\beta_{0,l}>0} \beta_{0,l}^{n/\gamma}\Big) \lambda^{-n/\gamma} (1+o(1)),\quad \lambda\downarrow 0.
\end{align*}

The same reasoning can be used to show that $n_+(B_0(\Ve-\hat V_0)B^*_0)=o(\lambda^{-n/\gamma}) $. 
Putting the previous inequalities together, for all $\delta, \tilde \delta\in(0,1)$ \begin{align*}
n_+(\lambda;\check T^\pm_\kappa)&=\sum_{j}n_+(\lambda;B_j \mathds{1}_{
\Box_j}  \Ve \mathds{1}_{\Box_j} B_j^*)\\
&\leq \frac1{q^n}\sum_{j}n_+(\lambda(1-\delta);B_j \Ve  B_j^*)+o(\lambda^{-n/\gamma})
\\
&\leq \frac1{q^n}\sum_{j}n_+\Big(\frac{\lambda(1-\delta)}{(1+\tilde{\delta})};B_j \hat V_0^\pm B_j^*\Big)+o(\lambda^{-n/\gamma})\\
&= \frac{\tau_n}{q^n}\sum_{j}
{\rm Tr}\Big((B_j\Gamma(1\pm\kappa) B_j^*)^{n/\gamma}\Big)
\,\left(\frac{\lambda(1-\delta)}{(1+\tilde{\delta})}\right)^{-n/\gamma} (1+o(1)),\quad \lambda\downarrow 0\ .\qedhere
\end{align*}

\end{proof}

\begin{proof}[Proof of \Cref{T2}]

Let $\varepsilon>0$, and take $B_\varepsilon=\sum_j B_{\varepsilon,j}\mathds{1}_{
\Box_{\varepsilon,j} } $   a  step matrix function such that 
$ \|M-B_{\varepsilon}\|_{L^p(\T^n)}<\varepsilon$. Assume that the size of each cube $\Box_{\varepsilon,j} $ is $1/q^n$ as in the previous lemma.

Take 
$
S_\varepsilon:=B_\varepsilon \Ve B_\varepsilon^*
$. Then by \cref{Cwikel-Bir-Sol} $\|T_\kappa^\pm-S_\varepsilon\|_{n/\gamma,w}<C\varepsilon$, which means that 
\bel{11jan24}
 n_*(\lambda; T_\kappa^\pm-S_\varepsilon  ) \leq (C\varepsilon)^{n/\gamma} \lambda^{-n/\gamma}.
\ee
Also, let $\check{T}_{\varepsilon,\kappa}^\pm=\sum_{j}B_{\varepsilon,j}\mathds{1}_{
\Box_{\varepsilon,j} } \Ve B_{\varepsilon,j}\mathds{1}_{
\Box_{\varepsilon,j} }$. Thus,  by \cref{2} 
\bel{11jan24b}
n_*(s;S_{\varepsilon}-\check{T}_{\varepsilon,\kappa}^\pm)=o(s^{-n/\gamma}).
\ee
Now, using \cref{3}, we have that for 
any $\delta \in (0,1)$
\begin{align}\label{11jan24c}
&\frac{\tau_n}{q^n}\sum_{j}
{\rm Tr}((B_{\varepsilon,j}\Gamma(1\pm\kappa) B_{\varepsilon,j}^*)^{n/\gamma})
\,(\lambda(1+\delta))^{-n/\gamma} (1+o(1))\nonumber\\
\leq &n_+(\lambda;\check{T}_{\varepsilon,\kappa}^\pm)\\
 \leq &
\frac{\tau_n}{q^n}\sum_{j}
{\rm Tr}((B_{\varepsilon,j}\Gamma(1\pm\kappa) B_{\varepsilon,j}^*)^{n/\gamma})
\,(\lambda(1-\delta))^{-n/\gamma} (1+o(1)),\quad \lambda\downarrow 0\nonumber\ .
\end{align}
Finally, putting together \cref{11jan24,11jan24b,11jan24c,weyl1,kyfan1}, and making $\lambda$, $\delta$ and  $\varepsilon$ goes to $0$, we finish the proof.
\end{proof}

\begin{proof}[Proof of \Cref{T1}]
The result follows from \Cref{T2} by taking $\kappa\downarrow 0$, \Cref{21fev2024}  and using the cyclicity of the trace.
\end{proof}

\section{The Laplacian on a particular \texorpdfstring{$\Z^2$}{Z2}-periodic graph}\label{sec:Laplace}

\subsection{A simple example of a \texorpdfstring{$\Z^2$}{Z2}-periodic graph with a flat band}

Let us start by briefly recalling some notions from the periodic graph theory, we refer to \cite{Su13,KS14,PR18} for more details. We say that a graph is $\Z^d$-- periodic if it admits an action of $\Z^d$ by graph--automorphisms. By fixing representatives of each orbit of vertices for this action we can define the entire part of a vertex by $\lfloor x\rfloor\check{x}=x$ where $\check{x}$ is the representative of the orbit of $x$. Then, the index of an oriented edge $\e=(x,y)$ is just $\eta(\e)=\lfloor y \rfloor - \lfloor x \rfloor $.  Note that $\eta$ is $\Z^d$--periodic and hence we can refer to the index of an edge in the quotient graph. 

Let us now denote by $\tilde{\mathcal X}=(\tilde{\V},\tilde{\A})$ the graph obtained from $\Z^2$ by adding a vertex on each edge with trivial weights (see \Cref{fig:grafoperiodico}).
The quotient graph obtained by the action of $\Z^2$ is composed by three vertices and four edges as presented in \Cref{fig:grafochico}. If we takes a representatives the vertices $(0,0)$, $(0,\tfrac12)$ and $(\tfrac12,0)$ One can easily check that $\eta (\e_1)=\eta(\e_2)=(0,0)$ while $\eta (\e_3)=(1,0)$ and $\eta(\e_4)=(0,1)$. 

\begin{figure}[h]
\begin{subfigure}{0.4\textwidth}
\begin{tikzpicture}
\draw (0,0) grid (5,5);
\draw[line width=3pt, line cap=round, dash pattern=on 0pt off 1cm](0,0) grid (5,5);
\draw[line width=3pt, line cap=round, dash pattern=on 0pt off 1cm](0.5,0) grid (4.5,5);
\draw[line width=3pt, line cap=round, dash pattern=on 0pt off 1cm](0,0.5) grid (5,4.5);
\end{tikzpicture}
\caption{The periodic graph obtained from $\Z^2$ by adding a vertex to each edge.}
\label{fig:grafoperiodico}
\end{subfigure}
\hfill
\begin{subfigure}{0.5\textwidth}
\vspace*{10pt}
\hspace*{60pt}
\begin{tikzpicture}
\node[shape=circle,draw=black] (A) at (1,1) {$x_{0,0}$};
\node[shape=circle,draw=black] (B) at (4,1) {$x_{1,0}$};
\node[shape=circle,draw=black] (C) at (1,4) {$x_{0,1}$};
\path [-] (A) edge[bend right=30, below] node {$\e_1$} (B);
\path [-] (A) edge[bend right=30, right] node {$\e_4$} (C); 
\path [-] (A) edge[bend left=30, above] node {$\e_3$} (B);
\path [-] (A) edge[bend left=30, left] node {$\e_2$} (C); 
\end{tikzpicture}
\caption{The quotient graph by the usual action of $\Z^2$.}
\label{fig:grafochico}
\end{subfigure}
\end{figure}
 Set $\tilde{H}_0=-\Delta_0$, where $\Delta_0$ is the usual graph Laplacian, \ie, for $f \in \ell^2(\tilde{\V})$ and $x\in \tilde{\V}$:
\begin{equation*}
(\tilde{H}_0f )(x)=\sum_{\e\in \tilde{\A}, \e=(x,y)} f(x)-f(y)\ .
\end{equation*}
Hence, by defining $\tilde{a}_j=1+e^{2\pi i \xi_j}$, for $j=1,2$, we obtain the following representation of the graph Laplacian as a matrix-valued multiplication operator. 
\begin{Proposition}[{\cite[Prop. 4.7]{PR18}}]\label{prop:laplace}
There exists a unitary operator $\tilde{\U}:\ell(\tilde{\V})\to L^2(\T^2;\C^n)$ such that 

\begin{equation*}
\tilde{\U} (\tilde{H}_0) \tilde{\U}^*=\tilde{h}_0
\end{equation*}
where $h_0$ denotes the multiplication operator by the real analytic function

\begin{equation*}
\tilde{h}_0:\T^2\to M_{3\times 3}(\C)
\end{equation*}

on $L^2(\T^2,\C^{3})$ given by

\begin{equation}\label{h_0laplace}
\tilde{h}_0(\xi)=\begin{pmatrix}
4&-\tilde{a}_1(\xi)&-\tilde{a}_2(\xi)\\
-\overline{\tilde{a}_1(\xi)}&2&0\\
-\overline{\tilde{a}_2(\xi)}&0&2
\end{pmatrix}\ .
\end{equation}
\end{Proposition}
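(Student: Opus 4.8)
The plan is to run exactly the same construction that was used for $H_0$ in Section \ref{subsec:Decom}, now adapted to the graph $\tilde{\mathcal X}$. First I would fix the three orbit representatives $x_{0,0}=(0,0)$, $x_{1,0}=(\tfrac12,0)$ and $x_{0,1}=(0,\tfrac12)$, and note that the $\Z^2$-action identifies $\ell^2(\tilde{\mathcal V})$ with $\ell^2(\Z^2)\otimes\C^3$, one copy of $\Z^2$ for each orbit. Define $\tilde{\U}:\ell^2(\tilde{\mathcal V})\to L^2(\T^2,\C^3)$ on compactly supported cochains by
\begin{equation*}
(\tilde{\U}f)(\xi)=\Big(\sum_{\mu\in\Z^2}e^{-2\pi i\xi\cdot\mu}f(\mu\, x_{0,0}),\ \sum_{\mu\in\Z^2}e^{-2\pi i\xi\cdot\mu}f(\mu\, x_{1,0}),\ \sum_{\mu\in\Z^2}e^{-2\pi i\xi\cdot\mu}f(\mu\, x_{0,1})\Big),
\end{equation*}
and check, exactly as for $\U$, that it extends to a unitary operator (it is a composition of the translation-to-orbit identification with three copies of the Fourier transform $\F$).

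Next I would compute $\tilde{\U}\tilde{H}_0\tilde{\U}^*$ by applying $\tilde H_0$ to a cochain and pushing the translations through. Each central vertex $x_{0,0}$ has degree $4$, contributing the diagonal entry $4$; each midpoint vertex $x_{1,0}$, $x_{0,1}$ has degree $2$, contributing the diagonal entry $2$. The off-diagonal entries come from the four edges of the quotient graph: the two edges with trivial index (call them $\e_1,\e_2$) contribute a constant $-1$, while the two edges $\e_3,\e_4$ with indices $(1,0)$ and $(0,1)$ contribute a phase factor $-e^{2\pi i\xi_j}$ after the Fourier transform (the sign of the exponent and whether it is $\xi_j$ or $-\xi_j$ depends on the orientation convention; one chooses it so that the $(1,2)$ and $(1,3)$ entries combine to $-\tilde a_1=-(1+e^{2\pi i\xi_1})$ and $-\tilde a_2=-(1+e^{2\pi i\xi_2})$ respectively). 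Collecting terms yields precisely the matrix \eqref{h_0laplace}, and real-analyticity of $\tilde h_0$ is immediate from the entrywise formula. Finally I would cite \cite[Prop. 4.7]{PR18} as the reference carrying out this computation in detail.

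The only mildly delicate point is bookkeeping: making sure the orientation of each quotient edge and the placement of the index $\eta(\e)$ are tracked consistently so that the phases land in the stated entries with the stated signs, and that no edge is double-counted in the degree-$4$ vertex (each of $\e_1,\e_3$ and each of $\e_2,\e_4$ are genuinely distinct edges of $\tilde{\mathcal X}$, as Figure \ref{fig:grafochico} shows, so the central vertex really does see four incident half-edges). Since this is purely a diagonalization of a translation-invariant operator on a finite quotient, there is no analytic obstacle; it is the same Bloch–Floquet computation as in Proposition \ref{Pr_h_0}, and the cited reference supplies the details.
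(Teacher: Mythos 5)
Your proposal is correct and is essentially the paper's approach: the paper offers no independent argument here, simply invoking the Floquet--Bloch decomposition of \cite[Prop. 4.7]{PR18}, which is exactly the computation you reconstruct (orbit-wise Fourier transform, diagonal entries from the vertex degrees $4$ and $2$, off-diagonal phases from the edge indices combining into $-\tilde a_j(\xi)=-(1+e^{2\pi i\xi_j})$), in complete analogy with \Cref{Pr_h_0}. Your attention to the orientation/index bookkeeping is the only genuinely delicate point, and you handle it correctly.
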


Setting as before $\tilde{r}(\xi)=|\tilde{a}_1(\xi)|^2+|\tilde{a}_2(\xi)|^2$, and noticing
\begin{equation*}
|\tilde{a}_j(\xi)|^2=2+2\cos(2\pi\xi_j)=4\cos^2(\pi\xi_j)
\end{equation*}
we can obtain the associated characteristic polynomial to $\tilde{h}_0$
\begin{equation*}
\tilde{p}_z(\xi)=(2-z)(z^2-6z+8-\tilde{r}(\xi))
\end{equation*}
and the corresponding non constant band functions
\begin{equation*}
\tilde{z}_\pm(\xi)=3\pm\sqrt{1+\tilde{r}(\xi)} \ .
\end{equation*}

It follows that the spectrum satisfies 
\begin{equation}\label{spectrumlaplace}
\sigma(\tilde{H}_0)=\sigma_{ess}(\tilde{H}_0)=\sigma_{ac}(\tilde{H}_0) 
=[0,2]\bigcup[4,6]
\end{equation} 
with $2$ an embedded degenerated eigenvalue. Given $\tilde{V}:\tilde{\V}\to \R$ we define the Schrödinger operator 
\begin{equation*}
\tilde{H}=\tilde{H}_0+\tilde{V}
\end{equation*}
and the corresponding eigenvalue counting function by 

\begin{equation*}
\tilde{\mathcal{N}}(\lambda)={\rm Rank} \mathds{1}_{(2+\lambda,3)}(H)\ ,
\end{equation*}
for $\lambda\in (0,1)$. As before, by taking the limit $\lambda\downarrow 0$ we will be able to study the accumulation of eigenvalues near the perturbed flat band.  

\begin{Remark} 
An attentive reader can wonder why this Laplacian operator show the same spectral properties than the Dirac operator studied in previous sections. From a purely computational point of view, the similarities with $H_0$ can be deduced from the fact that the symbol on $\T^2$ of $\tilde{H}_0-3$ correspond to the symbol of $H_0$ with $m=1$ by replacing $a_j$ with $-\tilde{a}_j$. In general, one can say that the clear distinction of the order of a differential operator gets muddy in the discrete case, see for instance the discussion related to the continuum limit of discrete Dirac operators \cite{Na24,CGJ22}.
\end{Remark}

\subsection{Admissible perturbations and eigenvalue asymptotics.}
Let us start by noticing that for every $\mu\in\Z^2$ we can define $f_\mu\in\ell^2(\tilde{\V})$ by
\begin{equation*}
f_\mu(x)=\begin{cases}
1 & \text{ if } x=\mu+(\tfrac12,0)\ ,\\
-1 & \text{ if } x=\mu+(0,\tfrac12)\ ,\\
0 & \text{ else.}
\end{cases}
\end{equation*}
and it satisfies $H_0f_\mu=2f_\mu$. Hence, if we decompose $\ell^2(\tilde{\V})$ by 
\begin{equation*}
\ell^2(\tilde{\V})\cong \ell^2(\Z^2)\oplus \ell^2(\Z^2+(\tfrac12,0))\oplus \ell^2(\Z^2+(0,\tfrac12))
\end{equation*}
we have that $$\Ker (H_0-2)\leq\{ \mathbf{0}\}  \oplus \ell^2(\Z^2+(\tfrac12,0))\oplus \ell^2(\Z^2+(0,\tfrac12))\ .$$

Then, if we define $\tilde{v}_j:\Z^2\to\R$, for $j\in\{0,1,2\}$ by 
\begin{equation*}
\tilde{v}_0(\mu)=\tilde{V}(\mu)\quad , \quad
\tilde{v}_1(\mu)=\tilde{V}(\mu+(\tfrac12,0))\quad \text{ and } \quad
\tilde{v}_2(\mu)=\tilde{V}(\mu+(0,\tfrac12))\ ,
\end{equation*}
we can apply \Cref{Def:admissible} to $\tilde{V}$. 

Let us now observe that for any $z\notin\sigma(\tilde{H}_0)$

\begin{multline*}
(\tilde{h}_0-z)^{-1}=
\frac{1}{p_z}\begin{pmatrix}
(2-z)^2& \tilde{a}_1(2-z)&\tilde{a}_2(2-z) \\
\overline{\tilde{a}_1}(2-z)&(2-z)(4-z)-|a_2|^2&\overline{\tilde{a}_1}\tilde{a}_2\\
\overline{\tilde{a}_2}(2-z)&\tilde{a}_1\overline{\tilde{a}_2}&(2-z)(4-z)-|a_1|^2
\end{pmatrix}\\
=
\frac{1}{(z^2-6z+8-\tilde{r})}\begin{pmatrix}
(2-z)& \tilde{a}_1&\tilde{a}_2 \\
\overline{\tilde{a}_1}&(4-z)&0\\
\overline{\tilde{a}_2}&0&(4-z)
\end{pmatrix}+
\frac{1}{p_z}\begin{pmatrix}
0& 0&0 \\
0&-|a_2|^2&\overline{\tilde{a}_1}\tilde{a}_2\\
0&\tilde{a}_1\overline{\tilde{a}_2}&-|a_1|^2
\end{pmatrix}\ .
\end{multline*}

Hence, we define $\tilde{M}:\T^2\to M_{3\times 3}(\C)$ by
\begin{equation}\label{eq:Mlaplace}
 \tilde M:=\frac{1}{\tilde r}\begin{pmatrix}
 0&0&0\\
 0&|\tilde{a}_2|^2&-\tilde{a}_2\overline{\tilde{a}_1}\\
 0&-\tilde{a}_1\overline{\tilde{a}_2}&|\tilde{a}_1|^2
\end{pmatrix}\ .
\end{equation}

\begin{Theorem}\label{theo:nonmaintheo}  Assume that $\tilde V$ is an admissible perturbation of order $\gamma$ and associate $3 \times 3$ matrix $\tilde \Gamma$. Define the constant $\tilde{\mathcal{C}}$ by
\begin{equation}\label{eq:constantenonmain} 
\tilde{\mathcal{C}}:=\int_{\T^2} {\rm Tr}\left( (\Gamma\tilde M(\xi) )^{\frac n\gamma} \right) d\xi\ .
\end{equation}
Then, the eigenvalue counting function satisfies
\begin{equation}\label{eq:nonmain} 
\tilde{\mathcal{N}}(\lambda)=\lambda^{-\frac{n}\gamma}\, \,(\tilde{\mathcal{C}}\,\tau_n+o(1)),\quad \lambda\downarrow 0\ .
\end{equation}
\end{Theorem}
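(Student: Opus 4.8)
The plan is to mirror, essentially verbatim, the proof of \Cref{T1} with $\tilde H_0$, $\tilde h_0$, $\tilde M$, $\tilde V$, $\tilde P := \mathds{1}_{\{2\}}(\tilde H_0)$ in place of $H_0$, $h_0$, $M$, $V$, $P$. The three key ingredients that made the Dirac argument work all have transparent analogues here. First, the explicit resolvent computed just above the statement gives $\tilde P = \tilde\U^* \tilde M \tilde\U$: indeed, decomposing $(\tilde h_0 - z)^{-1}$ as the sum of a term with a simple pole at the non-constant bands and a term with the pole coming from $\tilde p_z = (2-z)(z^2-6z+8-\tilde r)$, the Stone formula singles out the residue at $z = 2$, which is exactly $\tilde M(\xi)$; this is the analogue of \Cref{Le4}. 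Second, $\tilde M(\xi) \in L^p(\T^2;\C^{3\times 3})$ for every $p > 2$: this is the only regularity of $\tilde M$ used in the proof of \Cref{T2}, and it holds because the only singularities of $\tilde M$ occur where $\tilde r(\xi) = |\tilde a_1|^2 + |\tilde a_2|^2 = 0$, i.e.\ at $\xi = (\tfrac12,\tfrac12)$, and near that point $\tilde r$ vanishes quadratically (since $|\tilde a_j(\xi)|^2 = 4\cos^2(\pi\xi_j)$ vanishes to second order at $\xi_j = \tfrac12$), exactly as $r$ vanishes quadratically at $\mathbf 0$ in the Dirac case.

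Concretely I would carry out the following steps. \textbf{Step 1 (projection formula).} State and prove $\tilde P = \tilde\U^* \tilde M \tilde\U$ via the Stone formula, using the resolvent decomposition displayed above \eqref{eq:Mlaplace}; the computation reduces to $\lim_{\kappa\downarrow 0}\int_{0}^{1}\big(\frac{1}{2-s-i\kappa}-\frac{1}{2-s+i\kappa}\big)\,ds = -i\pi$, giving the Dirac delta at $2$. \textbf{Step 2 (reduction to the effective Hamiltonian).} Apply the sandwiching bound $\tilde H_\kappa^- \le \tilde H \le \tilde H_\kappa^+$ of \cite[Lemma 4.2]{PuRo11} with $\tilde H_\kappa^\pm := \tilde H_0 + \tilde P(\tilde V \pm \kappa|\tilde V|)\tilde P + \tilde P^\perp(\tilde V \pm \kappa^{-1}|\tilde V|)\tilde P^\perp$, and follow \cite[Theorem 4.1(ii)]{PuRo11} to get the analogue of \Cref{20fev2024b}. \textbf{Step 3 (the complement contributes less).} Prove the analogue of \Cref{Le1}: $\tilde{\mathcal N}((2+\lambda,3); \tilde P^\perp(\tilde H_0 + \tilde V \pm \kappa^{-1}|\tilde V|)\tilde P^\perp) = o(\lambda^{-n/\gamma})$. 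Here one writes $(\tilde H_0 - 2 - \lambda)^{-1}\tilde P^\perp$ from the resolvent, applies the Birman--Schwinger principle and \Cref{Cwikel-Bir-Sol}, and runs the same coarea estimate: $\int_{\T^2} (\tilde r + \lambda(\lambda + \text{const}))^{-2n/\gamma} \lesssim \lambda^{-2n/\gamma + 1}$ because $n = 2$ forces $n/2 - 1 = 0$ — wait, for $n=2$ the density $\rho^{n/2-1}=\rho^0$, so the integral behaves like $\lambda^{-2n/\gamma+1}$ exactly as before, yielding the $o(\lambda^{-n/\gamma})$ bound. \textbf{Step 4 (asymptotics of the effective Hamiltonian).} Prove the analogue of \Cref{T2}: with $\tilde T_\kappa^\pm := \tilde\U \tilde P(\tilde V \pm \kappa|\tilde V|)\tilde P\,\tilde\U^* = \tilde M\,\tilde\U(\tilde V \pm \kappa|\tilde V|)\tilde\U^*\tilde M$, one has $n_+(\lambda;\tilde T_\kappa^\pm) = \big(\tfrac{1\pm\kappa}{\lambda}\big)^{n/\gamma}\tau_n \int_{\T^2}{\rm Tr}\big((\tilde M\tilde\Gamma\tilde M)^{n/\gamma}\big)(1+o(1))$. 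The proof is the verbatim Birman--Solomyak partition-into-cubes argument (\Cref{2}, \Cref{3} and the proof of \Cref{T2}), which used only $\tilde M \in L^p$, $p>2$, from Step's regularity remark, plus \cref{eq:reduccion} for $\tilde V$. \textbf{Step 5 (conclusion).} Let $\kappa \downarrow 0$, use the sandwiching and cyclicity of the trace to replace ${\rm Tr}((\tilde M\tilde\Gamma\tilde M)^{n/\gamma})$ by ${\rm Tr}((\tilde\Gamma\tilde M)^{n/\gamma})$, obtaining \eqref{eq:nonmain}.

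I do not expect any genuinely new obstacle: the whole point of the section — and the content of the Remark comparing $\tilde H_0 - 3$ to $H_0$ with $m=1$ and $a_j \mapsto -\tilde a_j$ — is that the two problems are structurally identical. The one spot that needs a line of care is verifying that $\tilde M \in L^p(\T^2)$ for $p > 2$, i.e.\ that the sole zero of $\tilde r$ is nondegenerate: near $\xi = (\tfrac12,\tfrac12)$ write $\xi_j = \tfrac12 + t_j$ so that $|\tilde a_j|^2 = 4\cos^2(\pi\xi_j) = 4\sin^2(\pi t_j) \sim 4\pi^2 t_j^2$, hence $\tilde r \sim 4\pi^2(t_1^2 + t_2^2)$ and the entries of $\tilde M$, which are bounded by $|\tilde a_i||\tilde a_j|/\tilde r$, stay bounded — in fact $\tilde M$ is bounded, so it is trivially in every $L^p$. (This is even slightly cleaner than the Dirac case, where $M$ was genuinely singular at $\mathbf 0$.) Consequently every estimate goes through with the $\Z^2$-specific constants, and the statement follows. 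The minor bookkeeping is only in tracking the shifted spectral gap $(2,3)$ and the shifted threshold $2$ in place of $(-m,m)$ and $-m$.
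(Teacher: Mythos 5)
Your proposal is correct and is essentially the paper's own (implicit) argument: the paper gives no separate proof of \Cref{theo:nonmaintheo}, Section 5 being set up precisely so that the proof of \Cref{T1} transfers verbatim, which is what you carry out, including the only points that genuinely need checking ($\tilde P=\tilde\U^*\tilde M\tilde\U$ from the displayed resolvent decomposition, $\tilde M\in L^p(\T^2)$ for $p>2$, and the $n=2$ coarea bound). Two cosmetic slips worth fixing: in your Stone-formula model computation the integration must run over an interval abutting the eigenvalue $2$ (e.g.\ $(2,3)$, mirroring the paper's $(-m,0)$), not $(0,1)$, where the integrand has no singularity and the limit would be $0$; and the Dirac $M$ is likewise bounded (it is a fiberwise orthogonal projection, merely discontinuous at $\mathbf 0$), so the Laplacian case is not actually ``cleaner'' in that respect.
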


\subsection*{Acknowledgments}
P. Miranda was supported by the Chilean Fondecyt Grant 1201857. D. Parra was partially supported by the Chilean Fondecyt Grant 3210686 and Universidad de La Frontera, Apoyo PF24-0027. Both authors gratefully acknowledge the hospitality of the \emph{Institut de Mathématiques de Bordeaux} were the final draft of this manuscript was prepared. 

\printbibliography

\end{document}